\documentclass[final, 12pt]{article}
\usepackage{graphicx} 
\usepackage{tikz}
\usetikzlibrary{trees, calc, decorations.pathreplacing}
\usetikzlibrary {arrows.meta}
\usepackage{amsmath, amssymb, amsthm}
\usepackage{mathdots}
\usepackage{standalone}
\usepackage{changepage}
\usepackage{authblk}
\usepackage[most]{tcolorbox}
\usepackage{enumitem}
\usepackage{cleveref}
\usepackage{optidef}
\usepackage{xspace}
\usepackage{algorithm}
\usepackage{algorithmic}
\usepackage{csquotes}
\usepackage[english]{babel}
\usepackage{amsfonts}
\usepackage{multirow}
\usepackage{bigstrut}
\usepackage{etoolbox}
\usepackage[nolist]{acronym}

\usepackage{nomencl}

\setlength{\nomitemsep}{0.2em}
\makenomenclature
\setlength{\nomlabelwidth}{1cm}

\usepackage[backend=biber, sorting=none]{biblatex}





\title{Complexity Analysis of a Bicriteria Directed Multimodal Transportation Network Design Problem}
\author{Dominik Leib\thanks{dominik.leib@itwm.fraunhofer.de} }
\author{Susanne Fritzler}
\author{Neele Leithäuser}
\affil{Fraunhofer Institute of Industrial Mathematics ITWM}

\usepackage{biblatex}
\addbibresource{main.bib}

\newtheorem{theorem}{Theorem}[section] 
\newtheorem{lemma}[theorem]{Lemma} 
\newtheorem{corollary}[theorem]{Corollary}
\theoremstyle{definition}
\newtheorem{definition}[theorem]{Definition}

\newenvironment{decisionproblem}[3]
  {\vspace{8pt}
  \begin{adjustwidth}{0.5cm}{}
  \textsc{#1} \\[1ex]
   \noindent\textit{Instance:} #2 \\[1ex]
  \noindent\textit{Question:}  #3 
  \end{adjustwidth}
    \vspace{8pt}}

\newenvironment{searchproblem}[3]
  {\vspace{8pt}
  \begin{adjustwidth}{0.5cm}{}
  \textsc{#1} \\[1ex]
   \noindent\textit{Instance:} #2 \\[1ex]
  \noindent\textit{Task:}  #3 
  \end{adjustwidth}
    \vspace{8pt}}

\DeclareMathOperator{\Eta}{\mathrm{H}}
\DeclareMathOperator{\Tau}{\mathrm{T}}

\DeclareMathOperator*{\argmin}{arg\,min}
\DeclareMathOperator{\budgetB}{\beta}
\DeclareMathOperator{\budgetR}{\gamma}
\DeclareMathOperator{\dist}{\mathrm{dist}}
\DeclareMathOperator{\vv}{\mathrm{\textbf{v}}}

\newcommand{\sol}{X}
\newcommand{\terms}{T}
\newcommand{\NP}{\textbf{NP}}
\newcommand{\ie}{i.\,e.\@\xspace}
\newcommand{\eg}{e.\,g.\@\xspace}

\newcommand{\wrt}{w.r.t.\@\xspace}

\begin{document}

\begin{acronym}
	\acro{STP}{Steiner Tree Problem}
	\acro{ESUM}{Exact Subset Sum Problem}
        \acro{SSUM}{Subset Sum Problem}
	\acro{MSP}{Modal Split Problem}
        \acro{MSPR}{Relaxed Modal Split Problem}
	\acro{NDP}{Network Design Problem}
	\acro{DiNDP}{Directed Network Design Problem}
        \acro{DiSTP}{Directed Steiner Tree Problem}
        \acro{X3C}{Exact-3-Cover}
        \acro{KPS}{Knapsack Problem}
        \acro{UKPS}{Unbounded Knapsack Problem}
        \acro{MKPS}{Multidimensional Knapsack}
\end{acronym}

\maketitle

\begin{abstract}
    In this paper, we address a bicriteria network design problem that arises from practical applications in urban and rural public transportation planning. We establish the problem's complexity and demonstrate inapproximability results, highlighting the inherent difficulties in finding optimal solutions. Additionally, we identify special cases where approximability can be achieved, providing valuable insights for practitioners. Our proofs leverage complexity results related to directed network design problems, an area that has received limited attention in the existing literature. By investigating these complexity results, we aim to fill a critical gap and enhance the understanding of the interplay between bicriteria decision-making and network design challenges.    
\end{abstract}

\section{Introduction}
    Network design plays a crucial role in optimizing transportation systems, where the efficient allocation of resources significantly impacts overall performance. This field leverages mathematical techniques such as graph theory, linear programming, and combinatorial optimization to address complex challenges, including traffic flow, connectivity, and cost minimization. These methodologies provide a foundational framework for tackling the dynamic challenges inherent in network design, offering insights into efficient system architecture and resource management. Early references to network design include \cite{Wong84} and \cite{wong78}, while the complexity of such problems was established in \cite{LenstraKan1979} and \cite{wong80}. Modern references, such as \cite{crainic21} and \cite{farahani2013review}, provide detailed overviews of specific variants.

In public transportation systems, user convenience \textendash{} such as minimizing travel time and reducing detours \textendash{} often conflicts with the goal of energy reduction. Efficient routes that prioritize direct travel and frequent service can lead to increased energy consumption due to higher operational demands and less optimized scheduling. Conversely, energy-efficient strategies might involve fewer trips, longer wait times, or less direct routes, inconveniencing passengers. This inherent conflict necessitates the use of a bicriteria optimization approach. Since no single design is in general optimal across all objectives, bicriteria optimization provides a framework to evaluate trade-offs and identify balanced solutions. Multicriteria transportation problems have a long history due to the multiobjective nature of the problem. See \eg \cite{FRIESZ199344} for a very general $4$-criteria model solved by simulated annealing, or \cite{JHA2019166} for a metaheuristic approach to a multicriteria transit network design problem. Additionally, \cite{CURRENT1986187} delivers a review on early models and approaches to multicriteria transportation network design. Lastly, designing and planning public transportation networks at both tactical and operational levels is a multifaceted task involving numerous stages of varying difficulty. Multiple modes of transportation may also need to be considered to account for effects across them. Examples for works about multimodal transportation planning are \cite{cantarella06}, where a bimodal, multicriteria problem has been solved by a genetic algorithm or \cite{HOOGERVORST2024106640}, where segments of an existing bus line are upgraded to a rapid variant. In \cite{farahani2013review}, also an overview about further works on this topic is contained.

For this we propose a model in \Cref{sec:model} to provide a coarse indication of where to install which mode of transportation. The model aims to achieve energy reduction goals in a Pareto-optimal sense, i.e., while simultaneously respecting user convenience, under a given budget for investment. The number of users traveling, as well as their start and destination points, is fixed. However, the model determines the routes to take and the mode of transportation used on each connection. The investment strategy derived from the model's solution can then guide the detailed planning stages of transport network design. In \Cref{sec:complexityanalysis}, we investigate the computational complexity of this problem and establish its \NP-completeness, even in various strongly simplified cases. For this, we build on similar results for classical network design. However, due to the lack of results for the directed case, we establish it as well. Additionally, we demonstrate the inapproximability of the problem in the general case, meaning that no efficient algorithm can guarantee solutions within a certain approximation ratio. As a side result, we also transfer an inapproximability result from undirected to directed cases, with proof provided in the appendix. Finally, in \Cref{sec:approximability}, we demonstrate that our problem remains \NP-complete even when passenger flow is fixed. We also present an approximation strategy for this case to sample points on the Pareto frontier.

\section{Model} \label{sec:model}
    In the following, we propose a bicriteria network design model to study trade-offs between energy-efficient and convenient solutions. The model adopts the perspective of a provider engaged in tactical planning, who allocates a fixed budget to install public transportation modes on a predefined base network of possible connections. As a baseline, we assume that all passengers initially travel independently using individual transport. The provider's goal is to install multiple vehicles from a set of available modes and assign passengers to these modes.
The introduction of public transportation can reduce energy consumption due to consolidation effects\textendash\ie, the per-head energy consumption decreases as vehicle occupancy increases. However, achieving consolidation often requires passengers to accept detours compared to their direct routes via individual transport. Additionally, public transportation modes may be slower overall, further reducing user convenience.
To evaluate the trade-offs between these competing factors, we consider two primary objectives: Enhancement of user convenience by minimizing travel time and reduction of energy consumption, both under a given investment budget.
Both objectives are interrelated and critical for designing sustainable and efficient transportation networks. Promoting the use of public transport systems is a key strategy for achieving energy reduction, as it consolidates travelers into higher-capacity vehicles, thereby lowering per-head energy consumption and optimizing the network's overall energy efficiency.
To realize these benefits, it is essential to achieve a sufficient degree of occupancy within public transport vehicles. High occupancy not only enhances the economic viability of public transportation but also maximizes its environmental benefits by reducing energy use per traveler. Balancing these objectives\textendash user convenience and energy efficiency\textendash requires careful planning and resource allocation, which our proposed model aims to address.

Thus the \ac{MSP} is defined as follows: We are given a directed, weighted graph $G = (V,E,w)$, a travel \emph{demand} $D: V^2 \to \mathbb{N}_{\geq 0}$ with $D(v,v) = 0$ and a budget $B$ for public transport. The pairs $(s, t) \in V^2$ with $D(s, t) > 0$ are the \emph{commodities} $K$. We are given multiple options for public transport, where each public transport device is uniquely defined by its travel time $\tau_i$ per unit of distance, energy consumption $\eta_i$ and costs $c_i$ per unit of distance and vehicle and a capacity $k_i$ per vehicle installed for $i = 1,\ldots,m$. A \emph{layout} $L: E \to \mathbb{N}^m$ maps to each edge the number of assigned public transportation vehicles of mode $i$, it is feasible if $\sum_e w(e) \sum_{i=1}^m c_i L(e)_i \leq B$, \ie if the budget is respected. We don't assume the layout to be spanning, instead all passengers, which are not covered by public transport are assumed to travel independently by private transport, which will be mode $0$. Private transport has travel time and energy consumption per unit of distance $\tau_0, \eta_0$ as well, but costs of zero and is for simplicity assumed to be allowed on each edge without any capacity constraints. Individual transport is considered to be continuous, but we implicitly assume $k_0 = 1$. The movement of the passengers is described by a $s$-$t$-flow $F_{\kappa}$ for each commodity $\kappa = (s,t) \in K$, which is a function $F_{\kappa}: E \to \mathbb{R}_{\geq 0}$ s.t. $\sum_{e \in in(v)} F_{\kappa}(e)- \sum_{e \in out(v)} F_{\kappa}(e) = val$ with $val=0$ for $v \neq s,t$, $val = D(s,t)$ for $v=t$ and $val = -D(s,t)$ for $v=s$. We say that $D(s,t)$ is the \emph{value}  $|F_{\kappa}|$ of $F_{\kappa}$. Lastly mode assignment of each commodity is given by a \emph{modal split} $M_{\kappa}: E \to [0,1]^{m+1}$, which tells for each commodity, which percentage of the travelers use mode $i$ on edge $e$, where we require $\sum_{i=0}^{m} M_{\kappa}(e)_i = 1$ and $\sum_{ \kappa \in K} F_{\kappa}(e) M_{\kappa}(e)_i \leq k_i L(e)_i$ for all $e \in E$ and $i=1,\ldots,m$. 

We mostly use notation $F(e), M(e)_i$ for the accumulated values over all commodities for simplification, \ie \[F(e) := \sum_{\kappa} F_{\kappa}(e)\text{ and }M(e)_i := \frac{\sum_{\kappa}F_{\kappa}(e) M_{\kappa}(e)_i}{F(e)}\] for $i=0,\ldots,m$ if $F(e) > 0$ and $M(e) = (1,0,\ldots,0)$, else. A feasible \emph{solution} consists then of a triplet $(F,L,M)$, where $F = (F_{\kappa})_{\kappa}$ are flows for each commodity with values $|F_{\kappa}| = D(\kappa)$, $L$ is a feasible layout and $M$ a feasible modal split. Given a feasible solution $(F,L,M)$, the \emph{total travel time} evaluates as 

\begin{equation}
    \Tau(F,L,M) = \sum_e w(e)  \sum_i \tau_i F(e) M(e)_i
\end{equation}

and the \emph{total energy consumption} is

\begin{equation}
    \Eta(F,L,M) = \sum_e  w(e) \left(\eta_0F(e)M(e)_0 + \sum_{i > 0} \eta_i L(e)_i\right).
\end{equation}

The goal of our work is to compute proper solutions for the \ac{MSP} and to investigate the difficulty of this task. But the problem consists of two objective functions, which typically interfere, implying that there is in general no solution which is optimal in both objectives. A solution $\sol = (F,L,M)$ \emph{dominates} another $\sol' = (F',L',M')$ if $\Tau(\sol) \leq \Tau(\sol')$ and $\Eta(\sol) \leq \Eta(\sol')$, where strict inequality holds in at least one objective. In this case we also say that $(\Tau(\sol), \Eta(\sol))$ \emph{dominates} $(\Tau(\sol'), \Eta(\sol'))$. A goal of bicriteria optimization is often to compute a set of solutions, which are not dominated by any other solution. Such solutions are called \emph{efficient} or \emph{Pareto optimal}. The set of all efficient solutions is the \emph{efficient set} and its image the \emph{Pareto frontier}. A general introduction to multicriteria optimization (MCO) with rigorous definitions is found for example in \cite{ehrgott05}. One method to tackle MCO problems is the $\varepsilon$-constraints method, where typically all but one objective functions are bound and the problem is optimized for the remaining one. This gives rise to a definition of a decision problem for \ac{MSP}:

\begin{decisionproblem}{\ac{MSP}}
{A directed, weighted graph $G=(V,E,w)$ with $w: E \to \mathbb{R}_{\geq 0}$, a budget $B > 0$, demand $D: V^2 \to \mathbb{N}_{\geq 0}$ and mode data $\tau_i, \eta_i, c_i, k_i$ for $i=0,\ldots,m$ with $c_0=0$ and $k_0=1$ and $a,b \geq 0$.}
{Does there exist a feasible solution $(F,L,M)$ with \\$\Tau(F,L,M) \leq a$ and $\Eta(F,L,M) \leq b$?}
\end{decisionproblem}

\section{Complexity Analysis} \label{sec:complexityanalysis}

As already described in the introduction, the \ac{MSP} is closely related to classical network design, which will be the baseline for reduction. The complexity of many variants of network design problems have been investigated various times, but they often lack the directed counterparts. Therefore we give proofs and translations for the directed variant for existing theorems, and use those results to reduce to the \ac{MSP}. As this section is notation-heavy, the common used symbols and nomenclature throughout the paper are again listed at the end of the document for the readers convenience.

In directed network design we are given a directed graph $G = (V,E)$ with both a weight and a distance function $w,d: E \to \mathbb{R}_{\geq 0}$ on the edges. A directed path from node $u$ to node $v$ is a sequence of nodes $u=v_1, \ldots, v_k = v$ such that $(v_i, v_{i+1}) \in E$ for all $i=1,\ldots,k-1$. We call $\sum_{i=1}^{k-1} d(v_i,v_{i+1})$ the length of the path, which gives rise to a distance measure between nodes by $\dist(u,v)$ being the length of a shortest, directed path \wrt $d$, where we set $\dist(u,v) = \infty$ if no directed path from $u$ to $v$ exists. The \emph{total routing costs} of $G$ is the sum over all pairwise distances in $G$, \ie $R(G) := \sum_{u,v} \dist(u,v)$. When considering subgraphs induced by subsets $E'$ of $E$, we consider by $\dist|_{G'}$ the natural restriction of $\dist$ from $G$ to $G'=(V,E')$, analogously we may write $w(E')$ for $\sum_{e \in E'} w(e)$ for the total weight of $E'$. The decision problem \ac{DiNDP} is then defined as follows:

\begin{decisionproblem}{\ac{DiNDP}}
{A directed, weighted graph $G=(V,E,w,d)$ with $w,d: E \to \mathbb{Z}_{\geq 0}$ and $\budgetB, \budgetR \in \mathbb{Z}_{\geq 0}$.}
{Does there exist a subset $E' \subset E$ such that $w(E') \leq \budgetB$ and $R(G'=(V,E',w,d)) := \sum_{u,v \in V} \dist|_{G'}(u,v) \leq \budgetR$?}
\end{decisionproblem}

We will show \NP-completeness of \ac{DiNDP}, even in the unweighted case, by a reduction from \ac{X3C}, which will then be used to show \NP-completeness of \ac{MSP}. This implies \NP-completeness of the general case, but both the unweighted and the general variant of \ac{DiNDP} provide different approximation guarantees, therefore we also provide the results for the general case as well. The proofs for the general cases are to be found in the Appendix as \Cref{lem:appDiNDPComplete} and \Cref{lem:appDiNDPApprox}. All \NP-hardness results for the undirected network design has been established in \cite{LenstraNDP78}; the proofs for the directed versions follow similar ideas, but need some effort.

\begin{decisionproblem}{\ac{X3C}}
{A finite set $M = \{1,\ldots,n\}$ with $n \pmod 3 \equiv 0$ and a finite subset $\mathcal{S} \subset 2^M$ with $|S| = 3$ for all $S \in \mathcal{S}$.}
{Does there exist a subset $\mathcal{S}' \subset \mathcal{S}$ with $\cup_{S \in \mathcal{S}'} S = M$ and \\ $S \cap \tilde{S} = \emptyset$ for all $S,\tilde{S} \in  \mathcal{S}'$?}
\end{decisionproblem}

\ac{X3C} is \NP-complete, see \eg \cite{Karp1972}.

\begin{lemma} ~
\ac{DiNDP} is \NP-complete even if $w(e) = d(e) = 1$ for all $e \in E$ and $ \budgetB= 2(|V|-1).$
\end{lemma}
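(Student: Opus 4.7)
The plan is a polynomial-time reduction from \ac{X3C}, which is \NP-complete. Membership in \NP{} is immediate, since given any $E'$ one can verify $w(E')\le\budgetB$ and compute $R(G')$ by all-pairs shortest paths in polynomial time. Given an \ac{X3C} instance $(M,\mathcal{S})$ with $|M|=n$ and $|\mathcal{S}|=p$, I would build $G=(V,E)$ with vertex set $V=\{r\}\cup\{s_i\}_{i=1}^{p}\cup\{m_j\}_{j=1}^{n}$, so that $|V|-1=p+n$ and the budget $\budgetB=2(p+n)$ matches the statement. The edges, all of unit weight and distance, are the bidirectional pairs $r\leftrightarrow s_i$ for every $i$ and $s_i\leftrightarrow m_j$ for every $(i,j)$ with $j\in S_i$. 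The threshold $\budgetR$ will be set equal to the total routing cost of the canonical subgraph constructed in the forward direction.

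For the forward direction, given an exact cover $\mathcal{S}'\subseteq\mathcal{S}$ I would define $\pi\colon M\to\{1,\ldots,p\}$ by taking $\pi(j)$ to be the unique $i$ with $S_i\in\mathcal{S}'$ and $j\in S_i$, and let $E'$ consist of all $2p$ edges $r\leftrightarrow s_i$ together with the $2n$ edges $s_{\pi(j)}\leftrightarrow m_j$, saturating the budget. A direct enumeration of all pairwise shortest paths shows that the only $\pi$-dependent contribution to $R(G')$ is $\sum_{j\neq j'}\dist(m_j,m_{j'})=4n(n-1)-2\sum_i k_i(k_i-1)$, where $k_i=|\pi^{-1}(i)|$: element pairs sharing an attachment have distance~$2$, all others~$4$, while the remaining terms are completely determined by $p$ and $n$. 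The exact cover forces $k_i\in\{0,3\}$, so $\sum_i k_i(k_i-1)=2n$, fixing $\budgetR$.

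For the backward direction, I would argue that any feasible $E'$ with $R(G')\le\budgetR$ must induce a $\pi$ attaining the same maximum $\sum_i k_i(k_i-1)=2n$. Since the routing cost is finite, $E'$ must be strongly connected and therefore use essentially the full budget. A structural argument then shows that, up to routing-cost-non-increasing rearrangements, $E'$ can be taken to consist of all bidirectional $r\leftrightarrow s_i$ pairs together with one bidirectional $s_{\pi(j)}\leftrightarrow m_j$ per element: any alternative configuration, \eg replacing a set-root pair by a detour $r\to s_{i'}\to m_j\to s_i$, or splitting an element's two slots between different sets $\pi_{\mathrm{in}}(j)\neq\pi_{\mathrm{out}}(j)$, inflates at least one of the $\dist(r,s_i)$, $\dist(s_i,s_{i_0})$, or $\dist(m_j,m_{j'})$ values enough to outweigh any saving elsewhere. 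Within the canonical form, the bound $\sum_i k_i(k_i-1)\le 2n$, subject to $k_i\le|S_i|=3$ and $\sum_i k_i=n$, is attained with equality iff every nonzero $k_i$ equals $3$, equivalently iff $\{S_i : k_i>0\}$ is an exact cover.

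The main obstacle is precisely this structural step: systematically ruling out every deviation from the canonical bidirectional structure. The task reduces to a small number of qualitatively different swap patterns, each of which must be shown to yield a strictly larger routing cost; the remaining combinatorial maximization of $\sum_i k_i(k_i-1)$ under the size-$3$ constraint is then straightforward.
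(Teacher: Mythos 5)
Your high-level plan coincides with the paper's: both reduce from \ac{X3C} using a root, one node per set, one node per element, and both hinge on the observation that the element--element routing cost in the ``canonical'' subgraph is $4n(n-1)-2\sum_i k_i(k_i-1)$, which is minimized exactly when every nonzero cluster has size $3$, i.e.\ when an exact cover exists. Your forward direction and the combinatorial maximization are correct and match the paper's Table~1 (your $4n(n-1)-4n$ is its $C_{W,W}=4n^2-8n$).

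However, the step you defer as ``the main obstacle'' is the actual content of the proof, and your construction as stated makes it genuinely harder than it needs to be. With only $r$, the $s_i$ and the $m_j$, dropping a root--set arc costs only $O(p)$ in routing cost while freeing budget for extra set--element arcs, so excluding every deviation requires a global accounting with \emph{zero slack} (you set $\budgetR$ equal to the canonical optimum), and you have not carried it out; asserting that each swap pattern ``inflates at least one of the values enough to outweigh any saving elsewhere'' is precisely what must be proved. The paper sidesteps this with an amplification gadget you are missing: it attaches $h$ extra nodes to the root, with $h$ chosen larger than the total routing cost among the set and element nodes, so that omitting either direction of any root--set arc immediately adds at least $2h>\budgetR-R^*$ and is excluded outright. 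For the set--element arcs it then uses a clean local exchange (``tilting'' an asymmetric arc $(v,w)$ to $(v',w)$ or $(w,v')$ to $(w,v)$, one of which never increases routing cost, changing it by $\pm 2(r_1-r_2)$) to symmetrize the subgraph in polynomial time, after which it simply invokes the known undirected \NP-completeness result. To repair your proof you either need to add such a gadget (note this also changes $\budgetB=2(|V|-1)$ consistently, since the gadget nodes each require two arcs) and an explicit exchange argument, or carry out the full case analysis you sketched --- the former is substantially cleaner.
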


\begin{proof} By reduction from \ac{X3C}, let an instance $I=(M, \mathcal{S})$ be given with $n := |M|$ and $n=3l$ for some $l$, $|\mathcal{S}| = k\geq l$, and define an instance $J$ of \ac{DiNDP} by constructing a graph as in \Cref{fig:regulartree4}, where $h,\budgetR$ will be specified below. Connect $w_i$ and $v_j$ by a symmetric edge if $i \in S_j$. 

\begin{figure}[ht] 
\begin{center}
\includegraphics{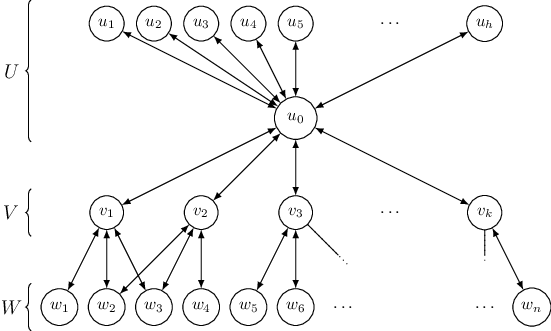}
\caption{The \ac{DiNDP} instance used for reduction. The bottom nodes are connected to some $v_j$ if and only if the corresponding elements are contained in the subset $S_j$.}
\label{fig:regulartree4}
\end{center}
\end{figure}

\begin{table}[h]
    \centering
    \begin{tabular}{|c|c|c|c|}
        \hline
        $C_{U,U}^*$ & $2h^2$ & $C_{U,W}^* = C_{W,U}^*$ & $3hn+2n$  \\
        \hline
         $C_{V,V}^*$  & $2k(k-1)$   & $C_{V,W} = C_{W,V}$ & $9nk-2n$ \\
        \hline
        $C_{W,W}$ & $4n^2-8n$ & $C_{U,V}^* = C_{V,U}^*$ & $2hk+k$   \\
        \hline
    \end{tabular}
    \caption{The routing costs between the subsets $U,V$ and $W$ on a variant of graph $G$ where exactly three bottom nodes are connected to some intermediate node or none.}
    \label{tab:routingcostsoptimal}
\end{table}

\Cref{tab:routingcostsoptimal} shows the routing costs for all pairs of the node sets $U,V,W$ for a similar graph $(V,E^*)$,  that is connected, symmetric and each node $v_i$ is connected to either exactly three nodes $w_j$ or to none and each node $w_j$ is connected to exactly one node $v_i$.

All entries with an asterisk are lower bounds on the routing costs for all spanning subgraphs of $G$, as one quickly verifies by the node distances affected. Set $h := C_{V,V}^* + 2C_{V,W} + C_{W,W}$ and $\budgetR$ to be the sum of all entries in \Cref{tab:routingcostsoptimal} (counting $C_{A,B}$ with $A \neq B$ twice). Assume $J$ decides true and let $E'$ be a spanning subgraph of size $2(|U \cup V \cup W|-1)$ and routing costs at most $\gamma$. We show that without restriction we can assume the resulting graph to be symmetric, the claim then follows by the undirected result. The edges in between $U$ which are contained in $E'$ are clearly symmetric, otherwise some $u_i$ is not reachable or vice versa. Analogously, the edges between $U$ and $V$ are symmetric as well, and there is exactly one symmetric edge between $u_0$ and each $v_j$. Assume the contrary, \ie that $(u_0,v_j) \notin E'$ for some $j$. Then the distance from each $u_i$ to $v_j$ increases by at least $2$ compared to the optimal case of $C^*_{U,V}$, leading to

\begin{align*}
    R(G') &\geq R_{U,U}(G') + R_{U,V}(G') + R_{V,U}(G') + R_{U,W}(G') + R_{W,U}(G')\\
        &\geq C_{U,U}^* + (C_{U,V}^* + 2h)+ C_{V,U}^* +2C_{U,W}^* \\
        &= \budgetR + h > \budgetR.
\end{align*}

Analogously, if $(v_j,u_0) \notin E'$, then $R_{V,U}(G') \geq (C_{V,U}^* + 2m)$, leading to the same result, thus $(u_0,v_j), (v_j,u_0) \in E'$. Lastly, we show symmetry of the edges in $E'$ between $V$ and $W$. Assume there is some $(v,w) \in E'$, with $(w,v) \notin E'$. But then $(w,v') \in E'$ due to connectedness for some $v' \in V$. There might be $r_1 \leq 2$ additional nodes $w_1,w_2$ connected to $v$ and $r_2 \leq 2$ many nodes $w_1',w_2'$ to $v'$, see \Cref{fig:symmetrizationtree} for the situation.

\begin{figure}[ht] 
\begin{center}
\includegraphics{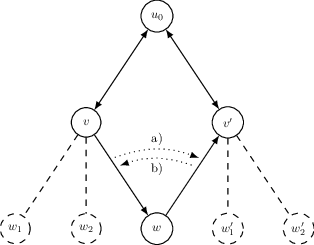}
\caption{Illustration of the two operations to create a subgraph with symmetric edges between $V$ and $W$. Operation a) deletes edge $(v,u)$ and adds edge $(u,v')$, whereas b) does the opposite. Both operations only affect the distances between the displayed nodes, if existing.}
\label{fig:symmetrizationtree}
\end{center}
\end{figure}

We compute the change in routing costs when tilting either $(v,w)$ to $(v',w)$ as option a) or $(w,v')$ to $(w,v)$ as b).  For a) the only distances affected are the ones from all nodes except $u_0$ to $w$ in \Cref{fig:symmetrizationtree}. The distances from $v,w_1,w_2$ increase by $2$ whereas the ones from $v',w_1',w_2'$ to $w$ decrease by $2$, which leads to additional routing costs of  $2(r_1-r_2)$. Conversely for option b) the routing costs increase by $2(r_2-r_1)$, which implies that either a) or b) don't increase the routing costs. Hence with the prior results, we can compute a symmetric subgraph in polynomial time that spans the nodes with at most the same routing costs. The claim follows now by the undirected variant (see \cite{LenstraNDP78}), where it is shown that $E'$ has routing costs of at most $\gamma$ if and only if it has the structure of $E^*$, \ie if each $v_i$ is symmetrically connected to exactly three bottom nodes $w_j$ or none. In this case the partition into sets of three is given by the connections from $V$ to $W$.
\end{proof}

\begin{theorem} \label{thm:MSPHard}
\ac{MSP} is \NP-complete, even if $w(e) = 1$ for $e \in E$, $m=1$, $D(u,v)=1$ for $u \neq v$ and $\eta_1, \tau_1 = 1$.
\end{theorem}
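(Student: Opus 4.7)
The plan is to reduce from the unit-weight \ac{DiNDP} instance established in the previous lemma, in which $\budgetB = 2(|V|-1)$. Given such an instance with routing cost bound $\budgetR$, I would construct an \ac{MSP} instance on the same graph $G$ using unit demand $D(u,v) = 1$ for all $u \neq v$, a single public transport mode ($m = 1$) with $\tau_1 = \eta_1 = c_1 = 1$ and generous capacity $k_1 := |V|^2$, private transport parameters $\tau_0 := \budgetR + 1$ and $\eta_0 := 0$, budget $B := \budgetB$, and thresholds $a := \budgetR$, $b := \budgetB$. Membership in \NP{} is routine: a certificate consisting of an integer layout, a path-decomposed integer flow per commodity (routing each unit demand along one path is possible thanks to $k_1$ being so large that capacity never binds), and a rational modal split has polynomial size and is verifiable in polynomial time.

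For the forward direction, if the \ac{DiNDP} instance has a solution $E' \subseteq E$ with $|E'| \leq \budgetB$ and $R(V,E') \leq \budgetR$, I would set $L(e)_1 = 1$ on $E'$ and $0$ elsewhere, then route each commodity $(s,t)$ along a shortest directed $(s,t)$-path in $(V,E')$ with all flow assigned to mode $1$. The budget constraint reduces to $|E'| \leq \budgetB$, capacity is trivially respected, and the two objectives evaluate to $\Tau = R(V,E') \leq \budgetR = a$ and $\Eta = |E'| \leq \budgetB = b$.

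For the converse, given any feasible MSP solution $(F,L,M)$ meeting the two bounds, I would take $E' := \{e : L(e)_1 \geq 1\}$, which already satisfies $|E'| \leq \budgetB$ by the budget. The crucial step is to show that $E'$ contains a directed $(s,t)$-path for every ordered pair, and that $\Tau$ dominates $R(V,E')$. For the first, suppose some $(s,t)$ admits no directed path in $E'$; letting $S$ be the set of nodes reachable from $s$ in $E'$, every $s$-$t$-flow must cross the cut $(S, V\setminus S)$ along edges outside $E'$, and on those edges the capacity constraint $F(e)M(e)_1 \leq k_1 L(e)_1 = 0$ forces $M(e)_0 = 1$; the unit commodity flow then contributes at least $\tau_0 = \budgetR+1$ to $\Tau$, contradicting $\Tau \leq \budgetR$. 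Hence every commodity can be (and effectively must be) routed entirely via mode $1$, and because $w(e)\tau_1 = 1$, $\Tau$ equals the sum of chosen $(s,t)$-path lengths in $(V,E')$, which is at least $R(V,E')$. Thus $R(V,E') \leq \budgetR$, certifying the \ac{DiNDP} instance.

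The main obstacle is the cut argument ruling out mode-$0$ usage on edges without installed vehicles; without it the layout $E'$ need not be connected in the relevant way and the reduction collapses. Beyond that the construction is a clean translation: layouts correspond to edge subsets, travel time to total routing cost, and both the budget and the energy threshold encode the edge-count bound $\budgetB$, while $a = \budgetR$ encodes the routing cost threshold of \ac{DiNDP}.
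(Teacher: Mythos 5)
Your construction is essentially the paper's reduction from unit-weight \ac{DiNDP} (same graph, unit demand, $\Tau$ playing the role of routing cost and $\Eta$/budget the role of construction cost), with one design difference: you penalize mode-$0$ usage through $\tau_0 = \budgetR+1$ while keeping $\eta_0=0$, whereas the paper sets $\tau_0$ small and $\eta_0 = |E|(\budgetB+1)$ huge, so that the energy bound (rather than the time bound) forces mode-$0$ usage to be negligible. Your forward direction and the cut argument showing that $E'=\{e : L(e)_1\ge 1\}$ must connect every ordered pair are correct.

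The gap is in the converse, at the step ``hence every commodity effectively must be routed entirely via mode $1$, and $\Tau$ equals the sum of chosen $(s,t)$-path lengths in $(V,E')$, which is at least $R(V,E')$.'' The cut argument only rules out a \emph{full} unit of commodity flow being blocked; it does not prevent the given solution from sending small fractions of flow in mode $0$, including over edges outside $E'$ as shortcuts, since a mode-$0$ flow-distance of $\varepsilon$ costs only $(\budgetR+1)\varepsilon$ and remains affordable for $\varepsilon < \budgetR/(\budgetR+1)$. So the given flow need not be supported on $E'$, need not be all mode $1$, and $\Tau$ is \emph{not} a sum of path lengths in $(V,E')$; the inequality $R(V,E')\le\Tau\le\budgetR$ does not follow from what you wrote. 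This is precisely where the paper spends most of its effort: it reroutes the residual mode-$0$ flow onto mode-$1$ paths inside $E'$, bounds the resulting increase of $\Tau$ by strictly less than $1$, and then invokes integrality of $R(V,E')$ and $\budgetR$. Your parameters admit an analogous repair: one may assume w.l.o.g.\ $\budgetR\ge |V|(|V|-1)$ (otherwise both instances are trivially negative, since $\Tau\ge\sum_e F(e)\ge R(G)$), whence $\tau_0=\budgetR+1$ exceeds $\tau_1\cdot\dist|_{E'}(u,v)$ for every pair, so rerouting any mode-$0$ flow on an edge $(u,v)$ onto a mode-$1$ path in $E'$ never increases $\Tau$; a subsequent shortest-path rerouting in $E'$ then yields $R(V,E')\le\Tau\le\budgetR$. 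Without this (or an equivalent per-path lower bound on $\Tau$), the reduction is not complete.
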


\begin{proof}
By reduction from \ac{DiNDP}, let $I := ((G,w=d=1),\budgetB,\budgetR)$ be an instance and w.l.o.g. assume $G$ is strongly connected, otherwise \ac{DiNDP} decides false. Define an instance $J$ of \ac{MSP} by copying $(G,w)$, and set $D(u,u) = 0$ for all $u$ and $D(u,v) = 1$ for $u \neq v$. We set $m=1$, thus besides individual transport we have one public transportation mode and set $\eta_1 = 1, \eta_0 = |E|(\budgetB+1)$, $k_1 = |E|^2$, $\tau_1 = 1$ and $\tau_0 = 0.25$. Finally we set $(a,b) := (\budgetB,\budgetR)$.

Assume \ac{MSP} decides true on $J$, let $(F,L,M)$ be a solution and set $E' := \{e \mid L(e)_1 > 0\}$ to be the subnetwork of mode $1$. Then $(V,E')$ is connected; assume the contrary, then there exists a cut $C$ in $G$ separating $s$ and $t$ for some $s \neq t$ that consists only of edges with $L(e)_1 = 0$. As $C$ is a cut and $F_{(s,t)}$ a feasible flow, it follows $\sum_{e \in C} F_{(s,t)}(e) \geq D(s,t) = 1$. But then $\Eta(F,L,M) \geq  \eta_{0} \sum_{e \in C} F_{(s,t)}(e) \geq \eta_{0}|F_{(s,t)}|  \geq (\budgetB+1) |F_{(s,t)}| \geq (\budgetB+1) > \budgetB,$ a contradiction.

Now construct another solution $(F',L',M')$ by setting $L' = L$ and rerouting the passengers such that mode $0$ is unused as follows: For any edge let $F_e^0 := M(e)_0 F(e)$ be the total number of mode-$0$ passengers on $e = (u,v)$ in $F$. As $E'$ is connected there is a shortest path in $u$ to $v$ in $E'$ with length at most $|E'| \leq |E|$. Reroute all mode-$0$ passengers on $e$ over that path and update $M$ accordingly, such that they use mode $1$ in $E'$. This reduces the total energy consumption and increases the travel time by at most $F_e^0 |E| \tau_{1} = F_e^0 |E|$. Doing so for every edge increases the travel time by at most $\sum_e F_e^0 |E|$ and delivers a solution with no mode-$0$ passengers at all. Lastly reroute all passengers in $E'$ on the shortest paths in $E'$ in mode $1$ to further reduce the travel time. Note that the capacity of mode $1$ is large enough to keep any amount of passengers.

\begin{figure}[ht] 
\begin{center}
\includegraphics{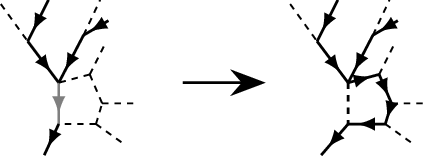}
\caption{Rerouting the flow from mode $0$ (in gray) on $e$ over a path of mode $1$ of the dashed edges.}
\label{fig:flowrerouting}
\end{center}
\end{figure}

Consequently, we have $\Eta(F',L',M')  = \sum_e L'(e)_1 \eta_{1} \leq \sum_e M(e)_0 F(e)\eta_{0} + L(e)_1\eta_{1}) = \Eta(F,L,M) \leq a = \budgetB$. As $\budgetB \geq \sum_{e}F_e^0 \eta_{0} \geq  \sum_e F_e^0 |E|(\budgetB+1),$ it follows $\sum_e F_e^0  |E| \tau_{1} = \sum_e F_e^0 |E|  < 1$. But then $\Tau(F',L',M') \leq \Tau(F,L,M) + \sum_e F_e^0 |E| \tau_{1} < \Tau(F,L,M) + 1 \leq \budgetR + 1$ and therefore $\Tau(F',L',M') \leq \budgetR$ as $\Tau(F',L',M')$ and $\gamma$ are integral. Now setting $E' := \{e \in E \mid L'(e) > 0\}$ delivers a solution for the \ac{DiNDP} instance $I$, as the construction costs are bound by $\Eta(F',L',M')$ and the routing costs agree with $\Tau(F',L',M')$.

Assume \ac{DiNDP} decides true on $I$ and let $E'$ be a solution. Setting $L(e')_1 = 1$ and $M_{\kappa}(e') = (0,1)$ for $e' \in E'$ and $(1,0)$ else, and $F_{\kappa}$ to a shortest path flow in $(V,E')$ for all $\kappa \in K$ delivers a feasible solution with $\Eta(F,L,M) = \sum_{e \in E'}1 \leq \beta = a$ and 
$\Tau(F,L,M) = \sum_{e \in E'} F(e) 
= \sum_{s, t} \dist|_{G'}(s, t) \leq \budgetR = b$.
\end{proof}

Next we give an inapproximability result for \ac{MSP}. For this we make use of the directed Steiner tree problem, which is similar to its undirected variant. In the \ac{STP}, we are given an undirected, weighted graph $G=(V,E,w)$ with $w: E \to \mathbb{N}_{\geq 0}$, $\terms \subset V$ and $\budgetB \in \mathbb{N}_{>0}$ and ask for a tree $G$ of weight at most $\budgetB$ that spans $\terms$. \ac{STP} is a widely discussed problem, whose \NP-completeness has been established various times, even for the case $w(e) = 1$ for all $e \in E$, see \cite{garey1979computers} for example. The counterpart to a tree in the directed case is the arborescence (see e.g. \cite{korte12}).

\begin{definition}[Arborescence \& Anti-Arborescence] Let $G = (V,E)$ be directed. An \emph{arborescence} is a subset $A \subset E$ such that
\begin{enumerate}
\item $A$ contains no undirected cycle,
\item Every vertex $v \in V$ has at most one entering arc in $A$,
\item $A$ is weakly connected.
\end{enumerate}
If instead of (2) it holds that every vertex has at most one exiting arc in $A$, then $A$ is called an \emph{anti-arborescence}. 
\end{definition}

The main difference to the undirected case is that the arborescences don't guarantee directed paths between each pair of nodes connected, but only from a defined root to all other nodes: One can show that there exists a defined node $r$ such that there is a unique, directed path to each node connected by $A$, the \emph{root} of $A$. An arborescence/anti-arborescence is \emph{spanning}, if it strongly connects every node to $r$.

\begin{decisionproblem}{\ac{DiSTP}}
{A directed, weighted graph $G=(V,E,w)$ with $w: E \to \mathbb{N}_{>0}$, $r \in V, \terms \subset V$ and $B \in \mathbb{N}_{\geq 0}$.}
{Does there exist an arborescence $A$ in $G$ of weight at most $B$, rooted at $r$ such that there exists a directed path in $A$ from $r$ to $v$ for all $v \in \terms$?}
\end{decisionproblem}

By a straightforward reduction from \ac{STP} one sees that \ac{DiSTP} is \NP-complete, even if $r \in \terms$ and $w(e) = 1 \ \forall e \in E$. We make use of this result to show inapproximability of the optimization problem \ac{MSP}, where we aim for minimizing $\Eta$.

\begin{searchproblem}{(\textbf{MSP})}
{A directed, weighted graph $G=(V,E,w)$ with $w: E \to \mathbb{N}_{>0}$, a budget $B$, demand $D$ and mode data $\tau_i, \eta_i, c_i, k_i$ for $i=0,\ldots,m$ with $c_0=0$ and $k_0=1$ and $a \geq 0$.}
{Find a solution $(F,L,M)$ with $\Tau(F,L,M) \leq a$ and $\Eta(F,L,M)$ minimal.}
\end{searchproblem}

A solution is an \emph{$\alpha$-approximation} for some instance of (\textbf{MSP}), if $\Tau(\sol) \leq a$ and $\Eta(\sol) \leq \alpha \Eta^*$, where $\Eta^*$ is the minimal value of $\Eta$ amongst all solutions with $\Tau(\sol) \leq a$. We say that (\textbf{MSP}) is $\alpha$-approximable, if there exists an algorithm that runs polynomial in $n$ and computes for each instance an $\alpha$-approximation, where $\alpha$ may be a constant or even a polynomial function in the parameters. The notation $a=\infty$ simply means that $\Tau$ does not need to be bounded to achieve inapproximability.

\begin{theorem} \label{thm:inapproxMSP}
    For any polynomial time computable function $\alpha(|V|)$, (\textbf{MSP}) is not $\alpha(|V|)$-approximable, unless $\textbf{P} = \NP$, even if $m=1$, $w(e)=d(e)=1$ for all $e \in E$ and $a = \infty$.
\end{theorem}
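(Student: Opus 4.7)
The plan is to reduce from \ac{DiSTP} with unit edge weights, which was just noted to be \NP-complete, and to exploit the asymmetry between the two transport modes' energy parameters to blow the objective up multiplicatively by $\alpha(|V|)$ on \textsc{no}-instances. Given a \ac{DiSTP} instance $(G=(V,E), r, \terms, B)$ with $w(e)=1$, I would build an \ac{MSP} instance on the same graph by setting $D(r,t)=1$ for every $t\in\terms$ (all other demands zero), taking $m=1$ with $\tau_1 = c_1 = \eta_1 = 1$ and capacity $k_1 := |V|$, using the \ac{MSP} budget $B_{\mathrm{MSP}} := B$, declaring $a := \infty$, and, crucially, setting the individual-transport energy to $\eta_0 := \alpha(|V|)B + 1$. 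Since $\alpha$ is polynomial-time computable and $B \leq |V|-1$, the parameter $\eta_0$ has polynomial bit-length, so the reduction runs in polynomial time.

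For the \textsc{yes} direction, given a Steiner arborescence $A\subseteq E$ of weight $\leq B$ rooted at $r$ and covering $\terms$, I would install $L(e)_1 := 1$ on $A$ and $0$ elsewhere, route each commodity $(r,t)$ along the unique $r$-$t$ path in $A$, and set $M_\kappa(e) := (0,1)$ on those paths. The installation cost is $|A|\leq B = B_{\mathrm{MSP}}$, the capacity constraint holds because $F(e)\leq|\terms|\leq|V|=k_1$, and the energy is $\Eta = \sum_{e\in A}\eta_1 = |A| \leq B$, so $\Eta^* \leq B$.

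For the \textsc{no} direction, consider any feasible solution $(F,L,M)$ and let $E_1 := \{e : L(e)_1 > 0\}$. Since $L$ is integer-valued and $\sum_e L(e)_1\leq B_{\mathrm{MSP}}=B$, we have $|E_1|\leq B$. If every $t\in\terms$ were reachable from $r$ in $(V,E_1)$, a BFS from $r$ would extract an arborescence of at most $B$ edges, contradicting the \textsc{no} assumption. So some terminal $t^*$ is unreachable from $r$ via $E_1$; let $C$ denote the cut of edges from reachable to unreachable nodes. The capacity constraint forces $F_\kappa(e) M_\kappa(e)_1 = 0$ for every $\kappa$ and every $e\notin E_1$, hence $M_{(r,t^*)}(e)_0 = 1$ wherever $F_{(r,t^*)}(e)>0$ on $C$. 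Because $F_{(r,t^*)}$ carries one unit from $r$ to $t^*$, it has $\sum_{e\in C}F_{(r,t^*)}(e)\geq 1$, and the mode-$0$ contribution alone gives
\begin{equation*}
\Eta(F,L,M) \geq \eta_0 \sum_{e\in C} w(e) F_{(r,t^*)}(e) M_{(r,t^*)}(e)_0 \geq \eta_0 = \alpha(|V|)B + 1.
\end{equation*}
Thus $\Eta^* > \alpha(|V|) B$ in the \textsc{no} case.

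Combining the two cases, any polynomial-time $\alpha(|V|)$-approximation for (\textbf{MSP}) would return a solution of value $\leq \alpha(|V|) B$ on \textsc{yes}-instances and $> \alpha(|V|) B$ on \textsc{no}-instances, thereby deciding \ac{DiSTP} in polynomial time and forcing $\mathbf{P}=\NP$. The main obstacle I expect is the \textsc{no}-direction bookkeeping: it must be carefully argued that (i) the integrality of $L$ together with $c_1=w(e)=1$ restricts the mode-$1$ subnetwork to at most $B$ edges, (ii) lack of an $r$-$t^*$ path in $E_1$ genuinely forces one full unit of flow across a cut consisting entirely of mode-$0$ edges due to the capacity constraint, and (iii) the scale $\eta_0 = \alpha(|V|)B+1$ remains polynomially bounded so that the reduction stays polynomial for every fixed polynomial-time computable $\alpha$.
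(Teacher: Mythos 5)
Your proof is correct and follows essentially the same route as the paper: a gap reduction from unit-weight \ac{DiSTP} on the same graph, with a yes-case upper bound on $\Eta^*$ via the arborescence layout and a cut argument forcing a large mode-$0$ contribution in the no-case. The only difference is where the multiplicative blow-up is placed \textendash{} you scale $\eta_0$ to $\alpha(|V|)B+1$ while keeping unit demand, whereas the paper keeps $\eta_0=\eta_1=1$ and instead scales the demand to $\lceil\alpha(n)(|E|+1)\rceil$ \textendash{} which is an equivalent mechanism.
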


\begin{proof}
    Let $I = (G =(V,E,w=1),r,\terms,\budgetB)$ be a \ac{DiSTP} instance with $|V| = n$, $\beta \leq |E|$,  $w(e) =1 \ \forall e \in E$, copy $G$ to define an instance $J$ of (\textbf{MSP}). Set $D(r,v) = \lceil \alpha(n)(|E|+1) \rceil$  for $v \in \terms\setminus{r}$. Lastly we set $B = \budgetB$ and have $m=1$ public transportation mode with $k_1 = \sum_{u,v} D(u,v)$, $c_1=1$ and $\eta_0 = \eta_1= 1$. Then $J$ is an instance, which is polynomial in size in all parameters of $I$, especially in $n$. Note that with those parameters, $|E|$ is an upper bound for $\Eta$, in the case that an arborescence of costs at most $B$ exists. 

    \medskip

    Assume there is a polynomial time algorithm to compute a solution $\sol$ for $J$ with $\Eta(\sol) \leq \alpha(n) \Eta^*$. If $I$ has a solution $A$, we define a solution $\sol'$ by setting $A$ to be the public transport network of mode $1$ and routing all passengers through it, leading to $\Eta^* \leq \Eta(\sol') \leq |E|$, thus $\Eta(\sol) \leq   \alpha(n) \Eta^* \leq \alpha(n) |E|$. If $I$ is not solvable, also no public network of costs at most $B$ that connects each $v \in \terms$ to $r$ oneway exists, and therefore at least one commodity $(r,v)$ traverses an edge with mode $0$ in every solution, leading to $\Eta(\sol) \geq \alpha(n)(|E|+1) > \alpha(n) |E|$. Thus \ac{DiSTP} decides true on $I$ if and only if $\Eta(\sol) \leq \alpha(n) |E|$, hence would be decidable in polynomial time.
\end{proof}

Analogously the optimization problem (\textbf{DiNDP}) asks for a subgraph $(V,E')$ with $\sum_{e \in E'} w(e) \leq \beta$ such that $R(G')$ is minimal. Inapproximability within $|V|^{(1-\varepsilon)}$ for the undirected \ac{NDP} has been established in \cite{wong80}. The proof for the directed case follows the same ideas and is contained in the Appendix as \Cref{lem:appDiNDPApprox}.

Also contained in \cite{wong80} is a $2$-approximation for the case of the undirected network design problem, where the edge weights $w(e)$ agree to be $1$. We modify this result for the directed case and give an approximation for a special case of (\textbf{MSP}).  Analogously to shortest path spanning trees, shortest path arborescences can be computed in polynomial time, an algorithm can be found in \cite{wang2015efficient}, \ie in polynomial time one can compute an arborescence $A^+_v$, rooted at $v$ that spans $V$, such that the unique, directed path from $v$ to $w \in V$ is a shortest path in $G$. Correspondingly we can do the same to achieve an anti-arborescence $A^-_v$, then $G_v := (V,E_v := A^+_v \cup A^-_v)$ is a strongly connected subgraph of $G$, a \emph{shortest path subgraph w.r.t. $v$}. Note that analogous to the undirected case, the number of edges of a spanning arborescence is exactly $|V|-1$.  Choosing a central node as $v$ and combining both arborescences delivers a $2$-approximation for (\textbf{DiNDP}) in the case of $w(e)=1$. We denote by $R^+_v$ the routing costs from $v$ to all $w \in V$ and by $R^-_v$ from all $w \in V$ to $v$.

\begin{lemma} \label{lem:2Approx}Let $G$ be directed and connected. Let 
\[
v := \argmin \left\{R_w^+ + R_w^- \mid w \in V\right\}
\]

and $E_v := A^+_v \cup A^-_v$. Then $R(G_v) \leq 2 \cdot R(G)$.
\end{lemma}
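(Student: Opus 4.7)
The plan is to combine a routing-via-$v$ argument with an averaging argument over the choice of $v$. The key structural observation is that in $G_v$, any node $u$ can reach any node $w$ by first travelling to $v$ along its directed path in the anti-arborescence $A^-_v$, and then from $v$ to $w$ along its directed path in the arborescence $A^+_v$. Since $A^+_v$ and $A^-_v$ were chosen so that these paths are shortest paths in $G$, we get
\begin{equation*}
\dist|_{G_v}(u,w) \;\leq\; \dist|_{G_v}(u,v) + \dist|_{G_v}(v,w) \;=\; \dist_G(u,v) + \dist_G(v,w).
\end{equation*}

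Summing this bound over all ordered pairs $(u,w) \in V \times V$ gives
\begin{equation*}
R(G_v) \;\leq\; \sum_{u,w} \bigl(\dist_G(u,v) + \dist_G(v,w)\bigr) \;=\; |V| \cdot R^-_v + |V| \cdot R^+_v.
\end{equation*}
So it suffices to show that $|V|\,(R^+_v + R^-_v) \leq 2R(G)$, which is exactly where the averaging argument enters.

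For this, I would observe that the total routing cost decomposes in two ways: fixing the target node, $R(G) = \sum_{w \in V} R^-_w$, and fixing the source node, $R(G) = \sum_{w \in V} R^+_w$. Adding them yields $2R(G) = \sum_{w \in V} (R^+_w + R^-_w)$. Because $v$ was chosen to minimise $R^+_w + R^-_w$ over $w \in V$, the minimum is at most the average, so
\begin{equation*}
R^+_v + R^-_v \;\leq\; \frac{1}{|V|} \sum_{w \in V} (R^+_w + R^-_w) \;=\; \frac{2R(G)}{|V|}.
\end{equation*}
Multiplying by $|V|$ and chaining with the earlier bound gives $R(G_v) \leq 2R(G)$, as desired.

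There is no real obstacle here; the only point that deserves a sentence of justification is the equality $\dist|_{G_v}(u,v) = \dist_G(u,v)$ (and symmetrically from $v$ to $w$), which holds because $A^-_v$ already contains a shortest $u$-to-$v$ path of $G$ as a subpath, and adding further edges (from $A^+_v$) cannot create a shorter one; likewise for $A^+_v$. Strong connectivity of $G_v$, used implicitly so that all the distances in question are finite, follows because $G$ is connected (hence strongly connected in the relevant sense for shortest-path arborescences to exist spanning all of $V$) and both arborescences span $V$.
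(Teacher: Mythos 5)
Your proof is correct and follows essentially the same route as the paper: the same two decompositions $R(G)=\sum_w R^+_w=\sum_w R^-_w$ combined with the minimum-at-most-average argument to get $n(R^+_v+R^-_v)\leq 2R(G)$, and the same routing-via-$v$ bound $R(G_v)\leq n(R^-_v+R^+_v)$. Your extra remarks on why $\dist|_{G_v}(u,v)=\dist_G(u,v)$ and on strong connectivity are welcome clarifications of steps the paper leaves implicit, but they do not change the argument.
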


\begin{proof} We have $R(G) = \sum_{w \in V} R^+_w$ and $R(G) = \sum_{w \in V} R^-_w $, thus $R(G) = \frac{1}{2} \sum_{w \in V} (R^+_w + R^-_w) \geq \frac{1}{2} \sum_{w \in V} (R^+_v + R^-_v) = \frac{n}{2}(R^+_v + R^-_v)$.  On the other side it holds $R(G_v) = \sum_w R_w^+(G_v) \leq \sum_w \sum_u (\dist|_{G_v}(w,v) + \dist|_{G_v}(v,u)) = \sum_w \sum_u (\dist(w,v) + \dist(v,u)) = n (R^-_v +  R^+_v)$. Thus  $R(G_v) \leq n(R_v^-+R_v^+) \leq 2 \cdot R(G)$.
\end{proof}

As a direct consequence we achieve a $2$-approximation for (\textbf{DiNDP}) in the case that the construction costs $w$ for each edge agree to $1$. Additionally \Cref{lem:2Approx} delivers a solution for special instances of \ac{MSP}, which approximates the $\Eta$-Pareto extreme in both objective functions.

\begin{corollary} Let $I$ be an instance of \ac{DiNDP} with $w(e) = 1$ for all $e \in E$ and $\budgetB \geq 2(|V|-1)$. Then $E_v$ defined as in \Cref{lem:2Approx} is a $2$-approximation for $I$.
\end{corollary}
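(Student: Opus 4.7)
The plan is to verify the two things an approximation guarantee requires: first that $E_v$ is a feasible solution for the instance, and second that its routing cost is within a factor of $2$ of the optimum. The feasibility part rests on the well-known fact that a spanning arborescence on $n$ nodes has exactly $n-1$ arcs; the approximation ratio part reduces immediately to \Cref{lem:2Approx} once one observes that the routing cost of the full input graph lower-bounds the optimal routing cost.

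Concretely, I would first argue feasibility as follows. Both $A^+_v$ and $A^-_v$ are spanning arborescences (one rooted, one anti-rooted, at $v$) of $G$, so $|A^+_v| = |A^-_v| = |V|-1$. Consequently $|E_v| = |A^+_v \cup A^-_v| \leq 2(|V|-1)$, and since the edge weights are all $1$ we get $w(E_v) \leq 2(|V|-1) \leq \budgetB$, which is exactly the budget constraint of the \ac{DiNDP} instance. Thus $E_v$ is admissible.

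For the approximation bound, let $E^\ast \subseteq E$ be an optimal feasible solution for the optimization problem (\textbf{DiNDP}), and write $R^\ast := R((V,E^\ast))$. Because $(V,E^\ast)$ is a subgraph of $G$, for every pair $(u,w)$ we have $\dist|_{G}(u,w) \leq \dist|_{(V,E^\ast)}(u,w)$; summing over all pairs yields $R(G) \leq R^\ast$. Combining this with \Cref{lem:2Approx} gives
\[
R(G_v) \;\leq\; 2\, R(G) \;\leq\; 2\, R^\ast,
\]
which is exactly the claimed $2$-approximation guarantee.

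There is no real obstacle here, since the heavy lifting has already been done in \Cref{lem:2Approx}; the only delicate points are remembering that a spanning arborescence has precisely $|V|-1$ arcs (so that the budget assumption $\budgetB \geq 2(|V|-1)$ matches the construction exactly) and observing the monotonicity of $R$ under edge deletion to relate $R(G)$ to $R^\ast$.
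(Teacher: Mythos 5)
Your proof is correct and follows exactly the route the paper intends: the paper states this corollary without a separate proof, treating it as a direct consequence of \Cref{lem:2Approx}, and your two observations (feasibility via $|E_v| \leq 2(|V|-1)$ with unit weights, and $R(G) \leq R^*$ by monotonicity of distances under edge deletion) are precisely the steps that make that consequence rigorous.
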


\begin{corollary} Let $I$ be an instance of (\textbf{MSP}) with $m=1$, $k_1 \geq (|V|-1)^2$, $c_1 \leq 1$, $B \geq |E|$, $w(e) = 1 \forall e$ and $\eta_0 > \eta_1$ and $D(s,t) = 1 \forall s \neq t$. Then there is a solution $\sol$ which can be computed in polynomial time, such that $\Eta(\sol) \leq 2 \Eta^*$ and $\Tau(\sol) \leq 2 \Tau^{\Eta^*}$, where $\Tau^{\Eta^*}$ is the minimal travel time over all solutions with an total energy consumption of $\Eta^*$.
\end{corollary}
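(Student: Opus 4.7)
My plan is to install one mode-$1$ vehicle on every arc of the shortest-path subgraph $G_v = (V, E_v)$ from \Cref{lem:2Approx} and to route each commodity $\kappa = (s,t)$ along the concatenation of the shortest $s$-to-$v$ path in $A_v^-$ and the shortest $v$-to-$t$ path in $A_v^+$, entirely in mode $1$. Since $|E_v| \leq 2(|V|-1)$ and $c_1 \leq 1$, the installation cost is at most $|E_v| \leq |E| \leq B$; the load on any arc of, say, $A_v^+$ is bounded by $(|V|-1)^2 \leq k_1$, because it is used by $(s,t)$ only when $t$ lies in the subtree below its head, leaving at most $|V|-1$ choices for $t$ and $|V|-1$ for $s$. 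Hence the candidate $\sol$ is feasible.

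For the energy bound, $\Eta(\sol) = \eta_1 |E_v| \leq 2\eta_1(|V|-1)$, so it suffices to show $\Eta^* \geq \eta_1(|V|-1)$. Given any feasible $(F',L',M')$, set $E_1' := \{e \mid L'(e)_1 \geq 1\}$. If $|E_1'| \geq |V|-1$, the installation term alone gives the bound. Otherwise, the weakly connected components $C_1,\ldots,C_w$ of $(V,E_1')$ satisfy $w \geq |V|-|E_1'| \geq 2$, and any ordered commodity across different components must traverse at least one mode-$0$ arc (since the capacity constraint forces pure mode-$0$ on arcs outside $E_1'$). Using $\sum_i |C_i|^2 \leq |V|(|V|-w+1)$, the number of such commodities is at least $|V|(w-1) \geq |V|(|V|-1-|E_1'|)$, so $\Eta' \geq \eta_1 |E_1'| + \eta_0 |V|(|V|-1-|E_1'|) \geq \eta_1 (|V|-1)(|V|-|E_1'|) \geq 2\eta_1(|V|-1)$ by $\eta_0 > \eta_1$, which is even stronger than required.

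For the travel-time bound, the via-$v$ routing and the identity inside the proof of \Cref{lem:2Approx} give $\Tau(\sol) = \tau_1 |V|(R_v^+ + R_v^-) \leq 2\tau_1 R(G)$, so it suffices to show $\Tau^{\Eta^*} \geq \tau_1 R(G)$. I would argue that every $\Eta^*$-optimal solution installs a strongly connecting layout $E_1^* \subseteq E$ and carries every commodity purely in mode $1$: since $\eta_0 > \eta_1$ and one mode-$1$ installation absorbs up to $k_1 \geq (|V|-1)^2$ units of flow, any positive mode-$0$ flow can be absorbed at strictly smaller energy by completing the layout, and removing any arc from a minimum strongly connected spanning subgraph forces at least $\eta_0 > \eta_1$ of replacement mode-$0$ cost. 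Any $(F',L',M')$ realizing $\Eta^*$ must therefore satisfy $\Tau(F',L',M') \geq \tau_1 \sum_{(s,t)} \dist|_{E_1^*}(s,t) \geq \tau_1 R(G)$, and chaining yields $\Tau(\sol) \leq 2\tau_1 R(G) \leq 2 \Tau^{\Eta^*}$.

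The step I expect to be the main obstacle is the structural claim about $\Eta^*$-optimal solutions used in the travel-time part. One has to exclude knife-edge exchange solutions in which a small amount of mode-$0$ flow exactly compensates for a dropped mode-$1$ arc at the same total energy; the clean accounting combines $\eta_0 > \eta_1$ with the observation that every arc removed from a minimum strongly connected spanning subgraph disconnects at least one commodity and therefore forces at least $\eta_0$ of replacement mode-$0$ cost, strictly outweighing the $\eta_1$ saved by the removed installation.
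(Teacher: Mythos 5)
Your overall strategy mirrors the paper's: build the candidate from $E_v = A_v^+ \cup A_v^-$ of \Cref{lem:2Approx}, bound $\Eta(\sol)$ by $2\eta_1(|V|-1) \leq 2\Eta^*$, and bound $\Tau(\sol)$ via $R(G_v) \leq 2R(G)$ against the claim that any energy-optimal solution consists of a connected all-mode-$1$ subnetwork with no mode-$0$ passengers. Your component-counting lower bound on $\Eta^*$ is in fact more careful than the paper's one-line assertion (which simply states that $L^*$ is spanning), and the structural claim you flag as the main obstacle in the travel-time part is exactly the step the paper also leaves informal.

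There is, however, a concrete gap in your feasibility check: the via-$v$ routing can violate the capacity $k_1 \geq (|V|-1)^2$ on arcs that lie in \emph{both} $A_v^+$ and $A_v^-$. On such an arc $e$ the load is $(|V|-1)\left(|S_e|+|T_e|\right)$, where $S_e$ is the set of sources whose $A_v^-$-path to $v$ uses $e$ and $T_e$ the set of targets whose $A_v^+$-path from $v$ uses $e$; these sets need not be disjoint and their cardinalities can sum to more than $|V|-1$. Concretely, take $V=\{v,x,y,z\}$ with arcs $(v,x),(x,y),(y,z),(y,v),(z,x)$. Then $A_v^+=\{(v,x),(x,y),(y,z)\}$ and $A_v^-=\{(y,v),(x,y),(z,x)\}$ are valid shortest-path (anti-)arborescences, and the via-$v$ routing places $12$ units of flow on $(x,y)$ while $k_1=(|V|-1)^2=9$; the commodities $(x,y)$, $(x,z)$ and $(z,y)$ each traverse $(x,y)$ twice. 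Installing a second vehicle there would break the bound $\Eta(\sol)\leq 2\eta_1(|V|-1)$. The repair is the paper's choice of routing along shortest paths \emph{within} $G_v$: no shortest path starting at $y$ or ending at $x$ can use the arc $(x,y)$, so any arc carries at most $|V|(|V|-1)-2(|V|-1)+1=(|V|-1)(|V|-2)+1\leq(|V|-1)^2$ units of flow, and since this routing only shortens every commodity's path compared to yours, the chain $\Tau(\sol)\leq\tau_1 R(G_v)\leq 2\tau_1 R(G)\leq 2\Tau^{\Eta^*}$ is preserved.
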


\begin{proof} Let $E_v := A^+_v \cup A^-_v$ as in \Cref{lem:2Approx} and $L(e) = 1$ if $e \in E_v$; furthermore let $F$ be a shortest path flow in $G_v$ and $M_e = (0,1)$ for all $e \in E_v$. $(F,L,M)$ is feasible due to $B$ and $k_1$ being sufficiently large. Let $\sol^*$ be a solution with $\Eta(\sol^*) = \Eta^*$ and $\Tau(\sol^*) = \Tau^{\Eta^*}$ and $L^*$ its layout. Since $\Eta^*$ is minimal, $\eta_0 > \eta_1$ and $B, k_1$ sufficiently large, there is a connected mode-$1$ sub network in $L^*$, with no mode-$0$ passengers in $\sol^*$ at all. But this implies $\Tau(\sol^*) \geq \tau_1 R(G)$. But then Lemma \ref{lem:2Approx} leads to $\Tau(F,L,M) = \tau_1 R(G_v) \leq \tau_1 2 R(G) \leq 2 \Tau(\sol^*)$. Since $\sol$ consists only of a mode-$1$ network with no mode-$0$ passengers at all, it holds $\Eta^* = \sum_e L(e) \eta_1 = \sum_{e: L(e) = 1} \eta_1$ and since $L^*$ is spanning and $w(e) = 1$, it follows $\Eta^* \geq \eta_1(|V|-1)$, as any spanning subgraph needs to have at least $(|V|-1)$ edges. Lastly $E_v$ has at most $2(|V|-1)$ edges and therefore $\Eta(\sol) \leq 2 \eta_1(|V|-1) \leq 2\Eta^*$.
\end{proof}

\section{Approximability of Pareto Optimal Solutions} \label{sec:approximability}
    Our last results show hardness of \ac{MSP} even in the easiest cases, but also their approximability. In network design, the main difficulty comes from the interplay between choosing routes for each commodity, and simultaneously providing infrastructure to enable those routes. If the passenger routes, \ie the flow, is set to be fixed, the infrastructure demand follows immediately. Due to the choice of multiple modes and capacity bounds on the vehicles, this task is a challenge by itself in the \ac{MSP}, even in the case that the passenger flow is predetermined or uniquely defined in advance due to a tree-like structure of the graph. In this context we call a directed graph \emph{tree-like}, if it is connected and does not contain non-trivial, directed cycles. It is easy to see that $G$ is tree-like, if and only if there exists exactly at most one cycle-free, directed path from $s$ to $t$ for every pair $(s,t)\in V^2$. We show now that even in those cases \ac{MSP} remains hard. The difficulty arises from a Knapsack like structure of the resulting problem, where the arcs have to be chosen for installment of public transportation devices.

\begin{decisionproblem}{\ac{MKPS} \& \ac{UKPS}}
{A set $S$ of $n$ items, each defined by a tupel $i := (s_i, w_{i,1}, \ldots, w_{i,r})$, where $s_i$ is its value and $w_{i,j}$ are the weights, together with $r$ bounds $A'_j$ and a target value $A$.}
{Do there exist integers $x_i$ such that $\sum_{i \in S} x_i s_i \geq A$ and \\ $\sum_{i \in S} x_i w_{i,j} \leq A'_j$ for $j=1,\ldots,r$? If $r=1$ then the problem is simply called \ac{KPS}; if $x_i \in \{0,1\}$ we call the problems \emph{bounded}. The bounded \ac{KPS} with $s_i = w_i$ is called \ac{SSUM}, if additionally $A=A'$, then it is called \ac{ESUM}.}
\end{decisionproblem}

\begin{theorem} \label{thm:MSPTreeNPComplete} ~
\begin{enumerate}[leftmargin=1.1cm, label=\alph*)]
\item \label{thm:msptreea} \ac{MSP} is \NP-complete even if $F$ is fixed to be any feasible flow. As a consequence, \ac{MSP} is \NP-complete, even if $G$ is tree-like. This also holds in the case $m=1$.
\item \label{thm:msptreeb} \ac{MSP} is \NP-complete, even if $G$ consists of only one directed edge $e$ with weight $w(e) = 1$.
\end{enumerate}
\end{theorem}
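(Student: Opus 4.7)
The plan is to reduce from suitable Knapsack variants in both parts, exploiting the bi-parameter budget-versus-energy structure of \ac{MSP}.

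For part (b), I would reduce from \ac{UKPS}. Given items $(s_i,w_i)_{i=1,\ldots,n}$ with target value $A$ and weight bound $A'$, build an \ac{MSP} instance on a single arc $e=(u,v)$ with $w(e)=1$ and demand $D(u,v)=A$. Introduce $m=n$ public modes with $c_i=w_i$, $k_i=s_i$, $\eta_i=0$, $\tau_i=0$, together with private transport having $\eta_0=1$; set $B=A'$, $b=0$ and $a$ arbitrary. The bound $\Eta\le 0$ forces $N_0=0$, so $A=\sum_i N_i\le\sum_i s_i L(e)_i$, while the budget reduces to $\sum_i w_iL(e)_i\le A'$. Setting $x_i:=L(e)_i$ gives exactly the \ac{UKPS} feasibility question; conversely any \ac{UKPS} solution $(x_i)$ yields a feasible layout $L(e)_i=x_i$ together with a passenger allocation $N_i\le s_ix_i$ summing to $A$.

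For part (a), the single-arc construction from part (b) already has $F$ uniquely determined, and a single arc is trivially tree-like, so \NP-completeness with fixed $F$ and on tree-like graphs follows from part (b). To obtain the extra $m=1$ claim, I would reduce from \ac{SSUM}. Given $(a_1,\ldots,a_n,A)$, construct a star with root $r$, leaves $l_1,\ldots,l_n$ and arcs $e_i=(r,l_i)$ of weight $w(e_i)=a_i$ carrying unit commodities $D(r,l_i)=1$. Take $m=1$ with $c_1=1$, $k_1=1$, $\eta_1=\tau_1=0$, $\eta_0=1$; set $B=A$, $b=\sum_i a_i-A$ and $a$ arbitrary. Letting $S:=\{i\mid L(e_i)_1\ge 1\}$, the budget gives $\sum_{i\in S}a_i\le A$, while the minimum achievable energy is $\sum_{i\notin S}a_i$, which must be $\le \sum_i a_i-A$, i.e.\ $\sum_{i\in S}a_i\ge A$. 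Together these force $\sum_{i\in S}a_i=A$, exactly \ac{SSUM}. The converse is immediate by installing mode $1$ on the arcs indexed by a feasible subset and routing the single passenger through it. Since the star is tree-like with unique flow, this simultaneously handles the fixed-$F$ and tree-like assertions in the $m=1$ setting.

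The main obstacle I expect is matching the integrality of $L$ to the Knapsack variant used. In part (b), $L(e)_i\in\mathbb{N}$ aligns naturally with the unbounded \ac{UKPS}. In the $m=1$ star reduction one has to argue that $L(e_i)_1\ge 2$ is never beneficial: since $k_1=1$ and the single passenger on $e_i$ needs at most one vehicle, any additional copies merely waste budget without enlarging effective capacity, so only $L(e_i)_1\in\{0,1\}$ is relevant and the reduction indeed matches bounded \ac{SSUM}. A secondary check is that the modal split $M$ cannot subvert the reduction; this follows because $\eta_1=0$ makes routing any public-transport-eligible passenger through mode $1$ always at least as good as through mode $0$, so the energy-minimizing split is the one used in the argument above.
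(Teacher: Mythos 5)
Your proposal is correct and follows essentially the same strategy as the paper: part (a) is a Subset-Sum reduction with $m=1$ on a tree-like graph with forced flow (the paper uses a one-directed path carrying a single demand of $2$ with edge weights $s_i$, you use a star with unit demands and edge weights $a_i$), and part (b) is an Unbounded Knapsack reduction on a single arc with one mode per item. Your parameter choice in (b) ($\eta_i=0$, $b=0$, so that the energy bound forces every passenger onto a public mode) is in fact slightly cleaner than the paper's ($k_i=2s_i$, $\eta_i=s_i$, $D(u,v)=2s_{\max}A'$), which needs an additional argument that all installed vehicles may be assumed full.
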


\begin{proof}
\begin{enumerate}[leftmargin=1.1cm, label=\alph*)]
\item Let $I:=(S,A,A')$ be an instance of \ac{SSUM} with $n = |S|$. Construct an instance $J$ of \ac{MSP} by setting $G$ to be a one-directed path graph with $|V| = n+1$, $w(e_i) = d(e_i) = s_i$ and $D(v_1, v_{n-1}) = 2$, see \Cref{fig:linegraph}. Then $G,D$ directly imply the only feasible flow $F$ with $F(e) = 2$ for all $e \in E$. Set $\eta = (1,1), c_1 = 1, k_1 = 2, B = A'$ and $b := \overline{S}-A$ where $\overline{S} := \sum_{i \in S} s_i$. $\tau$ and $a$ can be chosen arbitrarily. Let $L$ be a layout that solves $J$, without restriction we can assume $L(e) \in \{0,1\}$, $M_{\kappa}(e) = (0,1)$ if $L(e)_1 = 0$ and $M_{\kappa}(e) = (1,0)$, else.  This is due to the fact that $M$ maximizes the passengers of mode $1$, thus any other modal split at most increases the energy consumption $\Eta$ or is infeasible. The base consumption with all passengers using mode $0$ is $\eta_0 \overline{S} = \overline{S}$, each edge $e$ with $L(e) = 0$ reduces the energy consumption by $s_i$, thus $\Eta(F,L,M) = \overline{S} - \sum_{e_i: L(e_i) = 1} s_i \leq b = \overline{S} - A$, leading to $\sum_{e_i: L(e_i) = 0} s_i \geq A$. On the other hand the costs of $L$ are bounded by $A' = B \geq \sum_{e_i: L(e_i) =1} w(e_i)c_1 = \sum_{e_i} L(e_i)s_i$, thus $\{s_i \mid L(e_i) = 1\}$ is a solution for $I$. If $S' \subset S$ is a solution for $I$, then $(F,L,M)$ with $L(e_i) = 1$ if $s_i \in S'$ solves $J$ with similar considerations.

\begin{figure}[ht]
\begin{center}
\includegraphics{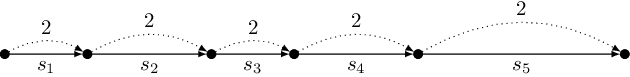}
\caption{The path graph $G$ with a flow value of $2$ between subsequent nodes and edge lengths corresponding to the items in $S$.}
\label{fig:linegraph}
\end{center}
\end{figure} 
\item By \ac{UKPS}, let $I:=(S,A,A')$ be an instance with w.l.o.g. $s, w \geq 1$ for all items in $S$ and set $G=(\{u,v\})$. For each item $(s, w) \in S$ define a mode $(\tau, \eta, k, c)$ by setting $\tau$ arbitrary, $\eta = s$, $k=2s$ and $c=w$. We set $D(u, v) = 2 s_{\max} A'$ and $B = A'$.  Lastly we set $\eta_0 = 1$, $a = \infty$ and $b = \max(0, 2 s_{\max} A'-A)$. Note that if $b$ is smaller or equal to zero, then $I$ decides false; this follows as at least twice as many items as $A'$ need to be chosen in this case to have a value of at least $A$, which is impossible due to $w \geq 1$ for all items. In this case $J$ also decides false, so we assume $b$ to be strictly positive.

The base energy with all passengers traveling by mode $0$ is $\eta_0 F_e^0 = 2 s_{\max} A'$. If $L$ is any layout with $\sum_i L(e)_i k_i = \sum_i L(e)_i 2s_i > D(u, v) = 2s_{\max}A'$, then also $\sum_i L(e)_i c_i = \sum_i L(e)_i w_i \geq \sum_i L(e)_i \frac{s_i}{s_{\max}} > A' = B$ as $w_i \geq 1$. Hence any feasible layout does not exceed $D(s,t)$ in its total public mode capacity. Therefore and because $\eta_i < \eta_0 k_i$, we can w.l.o.g. assume that every mode set in every solution is full. Thus setting one vehicle of mode $i$ and assigning $k_i=2s_i$ passengers to it reduces the base energy by $s_i$. 

Let a solution $(F,L,M)$ to $J$ be given.
Setting $x_i := L(e)_i$ gives then a solution to $I$ as $\sum_i x_i w_i = \sum_i L(e)_i c_i \leq B = A'$ and $\sum_i x_i s_i = \sum_i L(e)_i \eta_i = F_e^0 - \Eta(F,L,M) \geq 2 v_{\max} A' - b = A$. Conversely if $(x_i)_i$ solves $I$, then $L(e)_i := x_i$ solves $J$; all vehicles can assumed to be full as well with the same arguments and therefore $\Eta(F,L,M) = F_e^0 + \sum_{i > 0}L(e)_i(\eta_i - \eta_0 k_i) = 2 v_{\max} W + \sum_{i > 0}x_i(s_i-  2s_i) = 2 s  _{\max} W - \sum_{i > 0}x_i s_i \leq 2 v_{\max} A' - A = b$.
\end{enumerate}
\end{proof}

At this point we need to mention that neither \Cref{thm:MSPTreeNPComplete} \ref{thm:msptreea} implies \ref{thm:msptreeb} or vice versa, as the case of one edge with only one mode is clearly solvable in polynomial time. Additionally neither \ref{thm:msptreea} nor \ref{thm:msptreeb} imply \Cref{thm:MSPHard}, as we assume $w(e) = 1, D(u,v) = 1$ for $u \neq v$ and $m=1$ there, also leading to tractable cases on tree-like graphs. In contrast to \Cref{thm:MSPTreeNPComplete}, \ac{DiNDP} and \ac{DiSTP} are solvable in polynomial time in the case of a tree-like graph. In the case of \ac{DiSTP} there exists an unique arborescence connecting the terminal nodes, if any at all. Similarly for \ac{DiNDP} all unique, directed paths from every $v$ to $u$ must exist in $E'$ for solvable instances, making $E'$ unique up to unnecessary edges, which only increase the construction costs and don't provide to the routing costs.

Next we show that (\textbf{MSP}), in the case of a fixed flow, admits polynomial time approximation schemes (PTAS).   As a consequence we achieve that computing the most energy efficient solution in the case that $F$ is fixed provides an FPTAS. Both results rely on approximation schemes for \ac{KPS} and \ac{MKPS}, respectively. \ac{KPS} provides a fully polynomial time approximation scheme, see \cite{kellerer2004improved} for an overview and \cite{MAGAZINE1981270} for a concrete scheme with runtime $\mathcal{O}(n/\varepsilon)$. In contrast, for \ac{MKPS} there exists a PTAS, but neither a FPTAS (\cite{erlebach2002}) and not even an  EPTAS (\cite{kulik2010there}), unless \textbf{P} $=$ \NP.

\begin{theorem}\label{thm:approxfixedflow} ~

\begin{enumerate}[leftmargin=1.1cm, label=\alph*)]
    \item \label{thm:fptasa}(\textbf{MSP}) with fixed flow and $a = \infty$ provides an PTAS. 
    \item \label{thm:fptasb} If additionally $m=1$ holds, then there is actually a FPTAS. 
\end{enumerate}
\end{theorem}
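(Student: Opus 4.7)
The plan is to exploit that once $F$ is fixed, only the layout $L:E\to\mathbb{N}^m$ remains to optimize: given $L$, the optimal modal split places as many passengers as possible into installed public vehicles, since an installed vehicle's energy contribution $w(e)\eta_i$ is independent of its load while each unboarded passenger costs $w(e)\eta_0 > 0$. Writing $\Eta_0 := \sum_e w(e)\eta_0 F(e)$ for the all-mode-$0$ baseline, this yields $\Eta(L) = \Eta_0 - S(L)$ with
\[
S(L) = \sum_e w(e)\left[\eta_0 \min\left(F(e),\, \sum_i k_i L(e)_i\right) - \sum_i \eta_i L(e)_i\right],
\]
subject to $\sum_e w(e)\sum_i c_i L(e)_i \leq B$. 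Hence minimizing $\Eta$ reduces to maximizing the per-edge-separable, capped-linear savings $S$ under a single global knapsack-type budget.

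For part (b) with $m=1$: the per-edge savings $S_e(\ell)$ are piecewise-linear concave in $\ell \in \{0,\dots,\lceil F(e)/k_1\rceil\}$ (larger values are strictly wasteful once $\eta_1 > 0$). I would expand each edge into bounded items ``install the $j$-th vehicle on $e$'' of weight $w(e)c_1$ and marginal value $S_e(j)-S_e(j-1)$. Non-increasing marginals force every knapsack optimum to pick a per-edge prefix, so the bounded-KPS formulation is faithful, and the FPTAS for KPS from \cite{MAGAZINE1981270} yields $S \geq (1-\delta)S^*$ and hence $\Eta \leq \Eta^* + \delta S^*$.

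For part (a): each edge's subproblem is itself a bounded multi-item knapsack over the modes $i=1,\dots,m$. I would formulate the whole problem as an MKPS instance whose items are pairs $(e,i)$ with budget weight $w(e)c_i$, and whose further dimensions encode that vehicles past $\sum_i k_i L(e)_i = F(e)$ on an edge $e$ are unprofitable; the PTAS for MKPS then provides a $(1-\delta)S^*$-approximation of $S$.

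The main obstacle is the dimensional structure of the MKPS encoding in part (a): a naive per-edge saturation constraint would require $|E|$ dimensions and break the polynomial runtime of the MKPS PTAS. I expect to handle this either by exploiting that only the few ``binding'' saturations matter (and can be guessed in polynomial time) or by a DP across edges whose per-edge subproblem is approximately solved via an internal FPTAS-for-bounded-KPS call on a polynomial grid of per-edge budget allocations. A secondary hurdle, shared with part (b), is converting the savings approximation $\Eta \leq \Eta^* + \delta S^*$ into an energy approximation $\Eta \leq (1+\varepsilon)\Eta^*$: a case split on whether $\Eta^*$ dominates $S^*$ (where $\delta = \varepsilon$ already suffices) or not (where a logarithmic binary search over candidate values of $\Eta^*$ with $\delta$ scaled to $\varepsilon \Eta^*/\Eta_0$ works) preserves the (fully) polynomial runtime.
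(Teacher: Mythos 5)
Your overall strategy coincides with the paper's: fix the flow, observe that given $L$ the optimal modal split loads installed vehicles maximally, recast the objective as maximizing the energy reduction $\Eta'(\sol)=\Eta_0-\Eta(\sol)$ (your $S$) under the budget, encode this as a knapsack-type maximization with one item per potential vehicle (full vehicles plus one residual item per edge and mode), and convert a $(1-\delta)$-approximation of the reduction into a $(1+\varepsilon)$-approximation of $\Eta$ by tying $\delta$ to a lower bound on $\Eta^*$ \textendash{} the paper uses the a priori bound $\check{\Eta}=\min_i\eta_i\sum_e w(e)F(e)$ where you propose a binary search on $\Eta^*$, but these are interchangeable. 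Your part (b) is essentially the paper's KPS reduction (your concavity/prefix observation is a slightly different justification of faithfulness, but the formulation is the same). The interesting divergence is part (a): the paper does precisely what you call the ``naive'' encoding, namely an MKPS instance with $|E|+1$ constraint dimensions (the budget plus one saturation bound $F(e_j)$ per edge), and then simply invokes the MKPS PTAS. Your worry is well founded: the known PTAS for $d$-dimensional knapsack has running time exponential in $d$, so with $d$ growing like $|E|$ the cited result does not immediately give a polynomial-time scheme, and the paper does not address this. However, your proposed repairs (guessing the binding saturations, or a DP over per-edge budget allocations with internal KPS calls) are only sketched, so on this point neither your proposal nor the paper's argument is fully carried out; working out the DP variant rigorously would actually close a gap that the published proof leaves open.
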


\begin{proof} Let any $\varepsilon > 0$ be given and let $\eta := \min\{\eta_i \mid i=1,\ldots,m\}$. W.l.o.g. we can assume $\eta \neq \eta_0$, as otherwise (\textbf{MSP}) is solved trivially by the all-mode-$0$ solution. As one quickly verifies, $\check{\Eta} := \eta \sum_e w(e)F(e)$ is a lower bound for $\Eta$. Let $\Eta_0 := \eta_0 \sum_e w(e)F(e)$ and $\Eta'(\sol) := \Eta_0 - \Eta(\sol)$ and define $\delta \leq \varepsilon \frac{\check{\Eta}}{\Eta_0-\check{\Eta}}$ then $\delta \leq \varepsilon \frac{\Eta^*}{\Eta_0-\Eta^*}$ as well. Consequently, if $\sol$ is any solution with $\Eta'(\sol) \geq (1-\delta)\Eta'^*$ then also $\Eta(\sol) \leq \delta \Eta_0 + (1-\delta) \Eta^* = \delta(\Eta_0 - \Eta^*) + \Eta^* \leq (1+\varepsilon) \Eta^*$ (note that $\Eta'^* = \Eta_0 - \Eta^*$). Therefore, a PTAS/FPTAS for maximizing $\Eta'$ leads to the same result for (\textbf{MSP}) in the case of a fixed flow.

\begin{enumerate}[leftmargin=1.1cm, label=\alph*)]
    \item Setting a vehicle of mode $i$ on $e$ costs $w(e)c_i$ and reduces the energy consumption by $w(e)(k\eta_0 - \eta_i)$, where $k \leq k_i$ is the number of passengers, transferred from mode $0$ to this vehicle. For each combination of mode $i$ and edge $e$ we add items to $S$, where each item is a tuple of size $|E|+2$. The first entry is the value of the item, the second one corresponds to the costs and the third to $n+2$'th ones to the passenger load on the edges, for this we assume the edges are numbered from $j=1,\ldots,n$.
    \begin{enumerate}[label=\roman*)]
        \item For each $j=1,\ldots,|E|, i=1,\ldots,m$ the item \[(w(e_j)(k\eta_0 - \eta_i), w(e_j)c_i, 0,\ldots,0, k_i, 0 ,\ldots,0),\] where $k_i$ is entry $j+2$ corresponding to edge $e_j$, is added $F(e_j) \pmod{k_i}$ times.
        \item A vehicle of mode $i$ is valuable with at least $\eta_i/\eta_0$ passengers, as otherwise the energy reduction is negative. To take care of every partially filled vehicle, we add for each $i, j$ and potential rest of passengers $f \in [\lceil\eta_i/\eta_0\rceil, \ldots, \min(k_i, F(e_j))] \cap \mathbb{N}$ one item $(w(e_j)(f\eta_0 - \eta_i), w(e_j)c_i, 0,\ldots, 0,f,0,\ldots,0)$, where $f$ is again in the column corresponding to $e_j$.
    \end{enumerate}
    As bounds $W_j$ we have the budget $B$ for the second entries, and for the edge entries the flow value $F(e)$ as an upper bound to not place more passengers in public transport as available.
    \item The proof follows the same idea as \ref{thm:fptasa}, but reduces to definition of a \ac{KPS} instance. But we do not need to take care of the number of passengers in mode $1$ on each edge, which is known. Therefore we add for each edge the item $w(e)(k_1\eta_0-\eta_1, c_1)$ to $S$ for $v_e = F(e) \pmod{k_1}$ times. The potential rest of passengers is known to be $f_e := F(e) - v_ek_1 < k_1$ for $e \in E$. Certainly, we do not need to take care to not exceed the passenger flow and only add one item $(f_e\eta_0-\eta_1, c_1)$, in the case that $f_e \geq  c_1/c_0$. As a bound for the second entries we have the budget $B$.
    
\end{enumerate}

Without detailed elaboration it should be clear, that by maximization of the \ac{MKPS}/\ac{KPS} instances just defined we maximize the energy reduction $\Eta'$ by setting $L(e)_i$ to the number of occurrences of the combinations $e,i$ that in $S'$; the modal split is adapted to the flow value in mode $i$ on $e$, which is the sum of the entries corresponding to $e$, in a straightforward manner. With the discussion above and the existence of approximation schemes for \ac{MKPS} and \ac{KPS}, the result follows.
\end{proof}

The approximations of \Cref{thm:approxfixedflow} can be used to sample points on the Pareto frontier. To avoid notation overhead, we also call the bicriteria optimization problem (\textbf{MSP}) and talk about parameters and instances simultaneously, as it should be clear from the context what is meant. Each (\textbf{MSP}) instance can be modeled in a straightforward manner as a bicriteria mixed-integer linear optimization program (see \eg \cite{Wong84}). Continuous flow variables $x_{\kappa,e,i}$ describe how many passengers of commodity $\kappa$ use mode $i$ on edge $e$, whereas discrete design variables $y_{e,i}$ model the layout $L(e)_i$ for each edge and mode. Allowing continuous, nonnegative values for $y$ is an relaxation for (\textbf{MSP}), as the feasible set increases. To use this for a simple sampling of frontier points, we make use of the fact that a part of the Pareto frontier of the relaxed problem (\textbf{MSPR}), \eg where we allow $L:E \to \mathbb{R}_{\geq 0}^m$, has a very simple structure:

\begin{theorem} \label{thm:frontierrelaxed}
    Let $I$ be an instance of (\textbf{MSPR}) with $\tau_0 \leq \tau_i$ for $i=1,\ldots,m$ and $\frac{\tau_i k_i}{\eta_i} \leq \frac{\tau_{i+1} k_{i+1}}{\eta_{i+1}}$ for $i=0,\ldots,m-1$ and assume $c_m/k_m \leq c_i/k_i$ for all $i$ with $\frac{\tau_i k_i}{\eta_i} = \frac{\tau_m k_m}{\eta_m}$. Let $F$ be a shortest path flow and define $\Psi_0 := (\sum_{e} w(e)F(e)) (\tau_0,\eta_0)$ and $\Psi_1 := (\sum_{e} w(e)F(e)) (\tau_m,\eta_m/k_m)$. 
    
    Let $\delta = \min(B, \sum_e w(e)F(e)\frac{c_m}{k_m})/(\sum_e w(e)F(e)\frac{c_m}{k_m})$, then the line segment \[\Psi := \{\Psi(\lambda) \mid \lambda \in [0,1]\} := \{\Psi_0 + \lambda \delta (\Psi_1-\Psi_0) \mid \lambda \in [0,1]\}\]
    is a subset of the Pareto frontier of $I$.
\end{theorem}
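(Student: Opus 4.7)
The plan is to prove the theorem in two stages. First, for each $\lambda \in [0,1]$ I would exhibit an explicit feasible solution $X_\lambda$ of (\textbf{MSPR}) whose objective image is $\Psi(\lambda)$. Then I would establish Pareto optimality of each such $X_\lambda$ by a weighting/supporting-line argument whose normal direction is orthogonal to the segment $\Psi$.

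For the construction, given $\lambda \in [0,1]$, set $X_\lambda := (F, L^\lambda, M^\lambda)$, where $F$ is the prescribed shortest-path flow, the modal split places fraction $1-\lambda\delta$ of each commodity on mode $0$ and fraction $\lambda\delta$ on mode $m$ (all other coordinates are zero), and $L^\lambda(e)_m := \lambda\delta F(e)/k_m$ is continuous, hence admissible in the relaxation. Writing $W := \sum_e w(e) F(e)$, direct substitution into the definitions of $\Tau$ and $\Eta$ yields $(\Tau(X_\lambda), \Eta(X_\lambda)) = (1-\lambda\delta) W (\tau_0, \eta_0) + \lambda\delta W (\tau_m, \eta_m/k_m) = \Psi(\lambda)$. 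Feasibility follows from the budget check $\sum_e w(e) c_m L^\lambda(e)_m = \lambda\delta (c_m/k_m) W \leq \delta(c_m/k_m) W = \min(B,(c_m/k_m) W) \leq B$, while the mode-$m$ capacity constraint holds with equality.

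For Pareto optimality, set $\alpha := \eta_0 - \eta_m/k_m \geq 0$ and $\beta := \tau_m - \tau_0 \geq 0$. A short calculation shows that $\alpha \Tau(X_\lambda) + \beta \Eta(X_\lambda) = W(\eta_0 \tau_m - \tau_0 \eta_m/k_m) =: V^\ast$ for every $\lambda \in [0,1]$, so this weighted sum is constant along $\Psi$, and it suffices to prove $\alpha \Tau(X') + \beta \Eta(X') \geq V^\ast$ for every feasible $X' = (F', L', M')$. In the relaxation one may push $L'(e)_i$ down to $F'(e)M'(e)_i/k_i$ without increasing $\Eta$; introducing $\tilde\eta_0 := \eta_0$ and $\tilde\eta_i := \eta_i/k_i$ for $i > 0$ gives $\alpha \Tau(X') + \beta \Eta(X') = \sum_e w(e) F'(e) \sum_i M'(e)_i(\alpha \tau_i + \beta \tilde\eta_i) \geq W' \cdot \min_i (\alpha \tau_i + \beta \tilde\eta_i)$, and $W' \geq W$ because $F$ is a shortest-path flow. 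The argument thus reduces entirely to the per-mode inequality $\alpha \tau_i + \beta \tilde\eta_i \geq V^\ast/W$ for every $i$.

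The main obstacle is exactly that per-mode inequality, which rearranges to $(\tilde\eta_i \tau_m - \tilde\eta_m \tau_i) + (\eta_0 \tau_i - \tilde\eta_i \tau_0) \geq \eta_0 \tau_m - \tilde\eta_m \tau_0$. The ordering hypothesis $\tau_0/\eta_0 \leq \tau_i k_i/\eta_i \leq \tau_m k_m/\eta_m$ yields that each summand on the left is nonnegative via cross-multiplication, but establishing the full inequality requires combining both cross-product bounds together with $\tau_0 \leq \tau_i$ in a single estimate; the tie-breaking condition $c_m/k_m \leq c_i/k_i$ among maximum-ratio modes is invoked to rule out degeneracies where a different mode attaining the same extreme ratio but at lower unit cost would push the segment further. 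A secondary delicate point is the budget-tight endpoint at $\lambda = 1$ when $\delta < 1$: there one must observe that by definition of $\delta$ the entire budget is already spent on mode $m$, and any substitution toward a cheaper public mode yields at best a non-improving move in the $(\Tau,\Eta)$-plane by the same per-mode estimate, so $\Psi(1)$ remains on the frontier.
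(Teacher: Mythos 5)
Your overall architecture is sound and is essentially a linearization of the paper's argument: the explicit solutions $X_\lambda$ and the reduction of an arbitrary feasible solution to one with shortest-path flow and sharp layout both appear in the paper, and your weighted-sum functional with normal $(\alpha,\beta)=(\eta_0-\eta_m/k_m,\,\tau_m-\tau_0)$ is exactly the supporting line underlying the paper's geometric ``steepest direction'' picture. The genuine gap is that you never prove the per-mode inequality $\alpha\tau_i+\beta\tilde\eta_i\ge V^*/W$ (with $\tilde\eta_i:=\eta_i/k_i$ for $i>0$, $\tilde\eta_0:=\eta_0$), and this inequality is the entire content of the Pareto-optimality claim. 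You observe that each of the two summands in your rearrangement is nonnegative and then assert that ``combining both cross-product bounds together with $\tau_0\le\tau_i$'' gives the full estimate, but no such combination exists: the inequality says that the point $(\tau_i,\tilde\eta_i)$ lies on or above the line through $(\tau_0,\eta_0)$ and $(\tau_m,\tilde\eta_m)$, whereas the hypothesis only controls the slopes $\tau_i/\tilde\eta_i$ of the rays through the origin, which is strictly weaker. Concretely, take $\tau_0=\eta_0=k_0=1$, mode $1$ with $\tau_1=3/2$, $\eta_1=1$, $k_1=2$, and mode $m=2$ with $\tau_2=2$, $\eta_2=1$, $k_2=2$: then $\tau_0k_0/\eta_0=1\le 3=\tau_1k_1/\eta_1\le 4=\tau_2k_2/\eta_2$ and $\tau_0\le\tau_i$, yet $\alpha\tau_1+\beta\tilde\eta_1=\tfrac54<\tfrac32=V^*/W$. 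Indeed, with $W:=\sum_e w(e)F(e)$ the all-mode-$1$ solution has image $(\tfrac32W,\tfrac12W)$ and strictly dominates the point $(\tfrac32W,\tfrac34W)\in\Psi$, so the step does not merely lack a proof\textemdash it fails (and the theorem as stated fails with it; the paper's own proof makes the equivalent unproved claim that $\vv_m$ ``has the steepest slope'', which likewise does not follow from the ratio ordering). Closing the gap requires replacing the hypothesis $\tau_ik_i/\eta_i\le\tau_{i+1}k_{i+1}/\eta_{i+1}$ by a monotonicity condition on the slopes $(\eta_i/k_i-\eta_0)/(\tau_i-\tau_0)$, not supplying a cleverer algebraic combination.

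Two smaller points. A weighted-sum bound with weights that are merely nonnegative only certifies \emph{weak} Pareto optimality; since the hypotheses allow $\tau_m=\tau_0$ or $\eta_m/k_m=\eta_0$ (in which case $\Psi$ degenerates to a vertical or horizontal segment whose interior points are comparable), you need to argue $\alpha,\beta>0$ strictly, which the stated assumptions do not give you. Finally, your treatment of the budget-tight endpoint $\lambda=1$ and of the tie-breaking condition $c_m/k_m\le c_i/k_i$ is only a gesture: that condition governs how far the segment extends (the value of $\delta$), not the supporting-line estimate, and would need to be worked out explicitly to rule out a cheaper mode with the same extreme ratio reaching points beyond $\Psi(1)$.
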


\begin{figure}[ht]
\begin{center}
\includegraphics{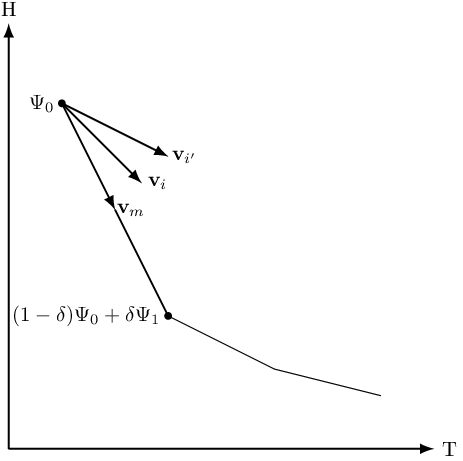}
\caption{The subsegment $\Psi$ of the Pareto frontier of the relaxation. The directional vectors $\vv_i := (\tau_i-\tau_0, \eta_i/k_i-\eta_0)$ show the change in both objective functions when passengers are transferred from mode $0$ to mode $i$. The vector $\vv_m$ of mode $m$ has the steepest slope and therefore provides directly to the Pareto frontier.}
\label{fig:paretoFrontierPsi}
\end{center}
\end{figure}

\begin{proof}

    We first show that $\Psi$ is contained in the image, i.e. corresponds to solutions. But this follows as $\Psi_0$ is precisely the image of the solution $(F,0,0)$ and $(F,L,M)$ with $L(e)_m = (0,\ldots,0,\delta (F(e)/k_m))$ and $M := (1-\delta,0,\ldots,0,\delta)$ is a preimage of $(1-\delta)\Psi_0 + \delta \Psi_1$. The intermediate points are convex combinations of $\Psi_0$ and $(1-\delta)\Psi_0 + \delta \Psi_1$ and therefore correspond to solutions due to convexity of the problem.
    Furthermore all solutions with image on $\Psi$ are pairwise incomparable, as $\Psi$ has slope of $-\frac{\tau_m k_m}{\eta_m}$, which is strictly negative and finite. Therefore for two solutions $\sol,\sol'$ with $(\Tau(\sol),\Eta(\sol)), (\Tau(\sol'),\Eta(\sol')) \in \Psi$ it follows either $\Tau(\sol) < \Tau(\sol')$ and $\Eta(\sol) > \Eta(\sol')$ or vice versa.
    
    \medskip
    
    Next we observe that every solution $\sol$ can be transformed into another one $\sol'$, where $F'=F$ is the shortest path flow, ${F'}_e^i = M'(e)_iF'(e) = k_i L(e)_i$ for $i=1,\ldots,m$ and $\Eta(\sol') \leq \Eta(\sol), \Tau(\sol') \leq \Tau(\sol)$. The latter is clear, as due to continuity of $L'$, there is no need of assigning more capacity than necessary. For the former let $\kappa = (s,t)$ be a commodity and assign the flow $F_{\kappa}$ to it. The percentage of mode $i$ passenger distance of $\kappa$ in $F'$ is $(\sum_e w(e) {M'}_{\kappa}(e)_iF'(e))/(\sum_e w(e) F'_{\kappa}(e))$. Setting $M_{\kappa}(e)_i$ to this value for all $e$ on the shortest path for all $i=0,\ldots,m$, and setting $L(e)_i = M(e)_iF(e)$ correspondingly, delivers a feasible solution as $(\sum_e w(e) F_{\kappa}(e)) \leq (\sum_e w(e) F'_{\kappa}(e))$. With the same argument $\Eta(\sol') \leq \Eta(\sol)$ and $\Tau(\sol') \leq \Tau(\sol)$.

    \medskip

    Now let $\sol$ be suchs a solution with shortest path flow and $L$-sharpness. We give a geometric argument, that $\sol$ can not dominate any solution on $\Psi$. Since $M(e)_0 = 1 - \sum_{i > 0} M(e)_i$, we can rewrite $(\Tau(\sol), \Eta(\sol))$ as
    
    \[ \sum_e w(e)F(e) \begin{pmatrix} \tau_0 \\ \eta_0 \end{pmatrix} + \sum_{i > 0} \left(\sum_e w(e)F(e) M(e)_i\right) \begin{pmatrix} \tau_i-\tau_0  \\ \eta_i/k_i-\tau_0\end{pmatrix} \] 
    
    since $L(e)_i = M(e)_iF(e)/k_i$ by sharpness. So from a geometric point of view, $(\Tau(\sol),\Eta(\sol))$ can be reached by starting at $\Psi_0$ and iteratively moving towards the directions $(\tau_i-\tau_0, \eta_i/k_i-\eta_0)$ one after another. The directional vectors $v_i := (\tau_i-\tau_0, \eta_i/k_i-\tau_0)$ have a strictly positive first component and strictly negative second component, with slope $\frac{\tau_i - \tau_0}{\eta_i/k_i - \eta_0} < 0$. But mode $m$ has the steepest slope, which is also the slope of $\Psi$, directed towards $\Psi_i-\Psi_0$ (see \Cref{fig:paretoFrontierPsi}). Certainly there is no way of moving below $\Psi$, or equivalently, there is no solution which strictly dominates any solution with objective value on $\Psi$.
\end{proof}

Any solution to an MCO problem, which is feasible and lies on the Pareto frontier of any relaxation is also Pareto optimal to the original problem. This is trivial, as dominating solutions to the original problem would also be dominating to the solution of the relaxation. Therefore, solutions that lie on $\Psi$ and are integral in $L$, are also Pareto optimal for the corresponding integer \ac{MSP} instance. As a consequence we achieve:

\begin{lemma} \label{lem:lowerparetooptimals} Let $I$ be an \ac{MSP} instance with the properties as in \Cref{thm:frontierrelaxed}. Then all solutions with $F$ being the shortest path flow, $L(e)_i = 0$ for $i=1,\ldots,m-1$ and $M(e)_mF(e) = k_m L(e)_m$ are Pareto-optimal.
\end{lemma}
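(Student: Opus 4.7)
The plan is to combine two ingredients already on the table: the geometric description of the segment $\Psi$ from \Cref{thm:frontierrelaxed}, and the general observation noted immediately before the lemma, namely that any feasible solution of the integer problem whose image lies on the Pareto frontier of a relaxation is Pareto-optimal in the integer problem itself. Every solution described in the lemma is in particular a feasible solution of (\textbf{MSPR}), so it suffices to prove that its image $(\Tau(\sol),\Eta(\sol))$ belongs to $\Psi$. I will then be done, since the lemma's preceding paragraph supplies the step from "on the Pareto frontier of the relaxation" to "Pareto-optimal for the integer \ac{MSP}".

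Concretely, I plan to reuse the algebraic decomposition appearing at the end of the proof of \Cref{thm:frontierrelaxed}. The hypothesis $L(e)_i = 0$ for $i = 1,\ldots,m-1$ forces $M(e)_i = 0$ for those modes on every flow-carrying edge via the capacity constraint, so that $M(e)_0 = 1 - M(e)_m$. Substituting into the representation
\begin{equation*}
\begin{pmatrix} \Tau(\sol) \\ \Eta(\sol) \end{pmatrix}
= \Psi_0 + \sum_{i>0} \left(\sum_e w(e) F(e) M(e)_i\right) \begin{pmatrix} \tau_i-\tau_0 \\ \eta_i/k_i-\eta_0 \end{pmatrix}
\end{equation*}
and using that $F$ is the shortest path flow (which is what $\Psi_0$ is computed against), collapses the right-hand side to $\Psi_0 + \Lambda \cdot v_m$ with $\Lambda := \sum_e w(e) F(e) M(e)_m$ and $v_m := (\tau_m-\tau_0,\eta_m/k_m-\eta_0)$. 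Since $\Psi_1 - \Psi_0$ is precisely $(\sum_e w(e)F(e)) \cdot v_m$, the image lies on the affine line carrying $\Psi$.

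The remaining and somewhat fiddly step — which I expect to be the main obstacle — is to check that $\Lambda$ lies in the correct interval $[0,\ \delta \sum_e w(e)F(e)]$ so that the image falls inside the segment $\Psi$ rather than its unrestricted extension. The lower bound is immediate. For the upper bound I plan to use sharpness: $M(e)_m F(e) = k_m L(e)_m$ gives $\Lambda = k_m \sum_e w(e) L(e)_m$, and the budget constraint $\sum_e w(e) c_m L(e)_m \leq B$ then yields $\Lambda \leq B k_m/c_m$. Combined with the obvious bound $\Lambda \leq \sum_e w(e)F(e)$ coming from $M(e)_m \leq 1$, this gives $\Lambda \leq \min(B k_m/c_m,\ \sum_e w(e) F(e))$, which is exactly $\delta \sum_e w(e)F(e)$ by definition of $\delta$. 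Once this parameter check is in place, invoking the relaxation principle finishes the argument with no further work.
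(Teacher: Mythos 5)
Your proposal is correct and follows essentially the same route as the paper: both rewrite $(\Tau,\Eta)$ as $\Psi_0$ plus a multiple of the direction $(\tau_m-\tau_0,\eta_m/k_m-\eta_0)$, bound that multiple by $\delta\sum_e w(e)F(e)$ using exactly the two bounds you name (the budget and the full-vehicle/modal-split bound), and then invoke the relaxation principle stated before the lemma. Your explicit remark that $L(e)_i=0$ forces $M(e)_i=0$ on flow-carrying edges via the capacity constraint is a small point the paper leaves implicit, but the argument is otherwise identical.
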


\begin{proof}
    By rearranging $\Tau$ and $\Eta$ we see \[\begin{pmatrix}\Tau(F,L,M) \\ \Eta(F,L,M) \end{pmatrix} = (\sum_e w(e) F(e)) \begin{pmatrix}\tau_0 \\ \eta_0 \end{pmatrix} + (\sum_e w(e) k_mL(e)_m) \begin{pmatrix} \tau_m-\tau_0 \\  \eta_m/k_m-\eta_0\end{pmatrix}.\]
    It is left to show $\sum_e w(e) k_mL(e)_m \leq \delta(\sum_e w(e) F(e))$, but this follows since the budget spend, i.e. $c_m\sum_e w(e) L(e)_m$ is bounded by $\sum_e w(e)F(e) \frac{c_m}{k_m}$ as all vehicles set are full. But it is also bounded by $B$, thus $\sum_e w(e)k_mL(e)_m  \leq \frac{k_m}{c_m} \min(B, \sum_e w(e)F(e) \frac{c_m}{k_m}) = \frac{k_m}{c_m} \delta (\sum_e w(e)F(e) \frac{c_m}{k_m}) =  \delta \sum_e w(e)F(e)$. As $(\Tau(F,L,M), \Eta(F,L,M))$ lies on the Pareto frontier of the relaxation by \Cref{thm:frontierrelaxed}, $(F,L,M)$ is Pareto optimal itself.
\end{proof}

Using \Cref{lem:lowerparetooptimals}, we can sample layouts that are Pareto optimal. All samples provide solutions, where only full vehicles of mode $m$ are set. Hence the maximal budget for this algorithm is $\hat{B} := c_m\sum_e w(e)(F(e) \pmod{k_m})$. Now given $B \leq \hat{B}$ and an \ac{MSP} instance as in \Cref{thm:frontierrelaxed}, we can use the FPTAS of \Cref{thm:approxfixedflow} \ref{thm:fptasb} to compute a Pareto optimal solution that is an $\varepsilon$-approximation to all solutions with only modes $0$ and $m$ present, under the given budget.

\begin{algorithm} 
\caption{Lower Left Frontier Sampling}
\begin{algorithmic}
    \STATE \textbf{Input:} \ac{MSP} instance $I$, $B \leq \hat{B}$, $\varepsilon > 0$
    \STATE \textbf{Output:} Pareto optimal solution with budget spend at most $B$
    \STATE Define \ac{KPS} instance by $S : = \cup_e \{\underbrace{s_e := w(e)(k_1\eta_0-\eta_m, c_m)}_{v_e = F(e) \pmod{k_m} \text{times}}\}$ and $W := B$
    \STATE Compute $\varepsilon$-Approximation $S'$ by \ac{KPS}-FPTAS
    \STATE Return solution $(F,L,M)$, where $F$ is shortest path flow, $L(e)_m := \#(s_e \in S')$
    and $M(e)_i = k_iL(e)_i/F(e)$ if $F(e) > 0$, else $M(e) = (1,0,\ldots,0)$
\end{algorithmic}\label{alg:frontiersampling}
\end{algorithm}

Given any feasible layout $L$ for some \ac{MSP} instance $I$, then fixing $L$ delivers a continuous, bicriteria multi-commodity flow problem, which has a Pareto frontier itself. Such frontiers are called \emph{patches}, the Pareto frontier of $I$ is the Pareto frontier of the union of all its patches as a set. 

\begin{figure}[ht!]
\begin{center}
\includegraphics{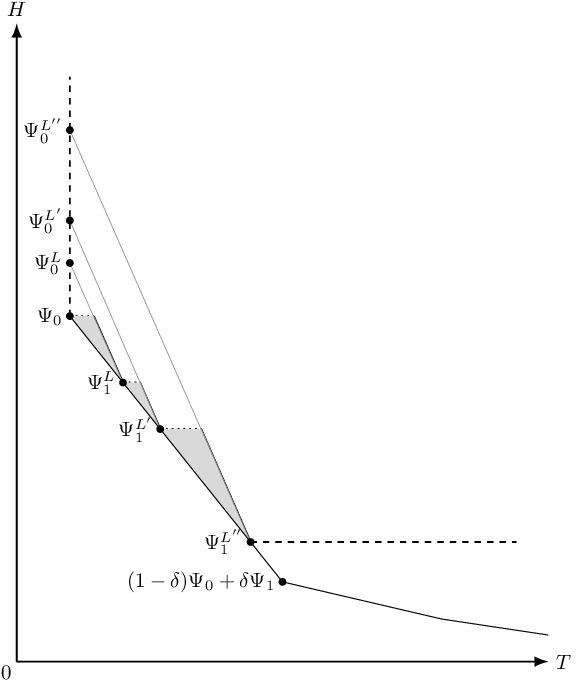}
\caption{Pareto optimal points $\Psi^L_1$ on $\Psi$ and their patches as grey lines. The patches are parallel and their dominated endpoints lie above $\Psi_0$. The shaded grey areas contain the Pareto frontier of all solutions, whose value lie in the area enclosed by the dashed lines and $\Psi.$}
\label{fig:paretoFrontierPsiPatches}
\end{center}
\end{figure}

In the context of \ac{MSP}, the patches desribe the users behavior under a given layout. They may use it or deny it, depending on the location of the patch points. We want to compute the patches for the solutions achieved with \Cref{alg:frontiersampling}, as they may contribute to the frontier of $I$. Let such a solution $\sol=(F,L,M)$ be given, \ie $F$ is a shortest path flow, $L$ only consists of mode $m$ and $L(e)_m k_m = F(e)M(e)_m$ for all $e \in E$. As $F$ and $L$ are fixed in the patch problem, the only degree of freedom left is $M(e)_m$ for all $e \in E$. 
Rearranging $\Tau(\sol)$ and $\Eta(\sol)$ and using $M(e)_0 = (1-M(e)_m)$ delivers for $(\Tau(\sol), \Eta(\sol))$ the value

\begin{equation} \label{eq:patchimage}
\begin{pmatrix} \tau_0\sum_e w(e) F(e) \\ \tau_0\sum_e w(e) F(e) + \eta_m L(e)_m \end{pmatrix} + (\sum_e w(e) F(e) M(e)_m) \begin{pmatrix} \tau_m - \tau_0 \\ - \eta_0\end{pmatrix}
\end{equation}

Additionally we observe $\left\{\sum_e w(e)F(e)M(e)_m \mid M(e)_m \in [0,1] \forall e \in E\right\} = \left\{\lambda \sum_e w(e)F(e) \mid \lambda \in [0,1]\right\}$: Relation $\supseteq$ is clear by choosing $M(e)_m = \lambda$, direction $\subseteq$ holds since $0 \leq \sum_e w(e)F(e)M(e)_m \leq \sum_e w(e)F(e)$ and $M(e)_m \leq 1$ for all $e \in E$. As a consequence, the image of the patch problem consists of the values of \Cref{eq:patchimage} for all feasible modal splits $M$ and is the line segment $\Psi^L$ from $\Psi^L_0 := (\tau_0\sum_e w(e) F(e), \tau_0\sum_e w(e) F(e) + \eta_m L(e)_m)$ and $\Psi^L_1 := \Psi^L_0 + (\sum_e w(e) F(e)) (\tau_m - \tau_0, - \eta_0)$. As the directional vector $(\tau_m - \tau_0, - \eta_0)$ has one positive and one negative entry, $\Psi^L$ is also the Pareto frontier of the patch problem as two different points in $\Psi^L$ are incomparable. The solution with value $\Psi^L_0$ describes a complete rejection of the layout, where every passenger travels by mode $0$ despite existence of $L$. For proper layouts $L \neq 0$, those solutions are strictly dominated by the all-mode-$0$ solutions without any existing layout, which have value $\Psi_0$. On the other hand, $\Psi^L_1$ solutions are fully accepted and Pareto optimal by \Cref{lem:lowerparetooptimals}. For the solutions on $\Psi^L$ in between the two endpoints no statement can be made in general, but they give an inner bound for the full Pareto frontier of $I$, whereas the segment $\Psi$ of the Pareto frontier of the relaxation serves as an outer approximation. See \Cref{fig:paretoFrontierPsiPatches} for a visualization.

\section*{Declaration of competing interest}

The authors declare that they have no known competing financial interests or personal relationships that could have appeared to influence the work reported in this paper.

\section*{Acknowledgement}
    This work was partially funded by the Bundesministerium für Bildung und Forschung (BMBF) in the context of the SynphOnie Project: Synergien aus physikalischen und verkehrsplanerischen Modellen zur multikriteriellen Optimierung multimodaler nachfrageorientierter Verkehre.

\printbibliography

\appendix

\section{Appendix}

\begin{lemma} \label{lem:appDiNDPComplete} \ac{DiNDP} is \NP-complete.
\end{lemma}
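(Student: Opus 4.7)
The plan is to derive \NP-completeness in two complementary ways.

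The first route is by direct restriction from the main text's \NP-hardness result for \ac{DiNDP} in the restricted setting $w(e) = d(e) = 1$ and $\budgetB = 2(|V|-1)$: since that restricted case is already \NP-hard, the general problem is at least as hard, so no additional reduction is strictly required. Membership in \NP is routine. A feasible edge subset $E' \subseteq E$ is a polynomial-size certificate; checking $w(E') \leq \budgetB$ is linear-time, and $R((V,E',w,d))$ can be computed in $O(|V|^3)$ time by any all-pairs shortest-paths algorithm, with unreachable pairs contributing $\infty$ and thereby forcing infeasibility whenever $(V,E')$ fails to be strongly connected.

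For a self-contained argument I would instead reduce from the undirected \ac{NDP} of \cite{LenstraNDP78}. Given an undirected instance $(G,w,d,\budgetB,\budgetR)$, I bidirect every edge by introducing two antiparallel arcs of identical weight and distance, and pass to the budgets $\budgetB' := 2\budgetB$, $\budgetR' := \budgetR$. The forward direction is immediate: the symmetric lift of any feasible undirected $E^*$ is feasible in the directed instance, with weight $2w(E^*) \leq \budgetB'$ and identical pairwise distances, since every undirected shortest path lifts to a directed shortest path of the same length.

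The reverse direction is the main obstacle and is exactly the step where I expect the bulk of the work to sit. The plan is to argue that any feasible directed solution $E^\star$ can be symmetrized via local arc tilts analogous to those used in the main text's proof (the $(v,w)\mapsto(v',w)$ swaps), so that the resulting symmetric arc set has weight at most $\budgetB'$ and routing cost at most $\budgetR$. Such a symmetric set then corresponds to an undirected subgraph of weight at most $\budgetB$ with identical pairwise distances, closing the reduction. The delicate point is tracking the joint effect of each tilt on both weight and routing cost in the general weighted setting: the main text analyzed the unit-weight regime, so here I will need a case distinction based on the relative weights of the arc being removed and the arc being added, together with an argument that whenever a tilt would increase the weight beyond $\budgetB'$, strong connectivity together with feasibility of $R$ forces the existence of a reverse tilt at the complementary location that compensates. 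This bookkeeping is what I expect to be the crux of the proof.
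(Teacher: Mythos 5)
Your first route is correct and already closes the proof: the unit-weight \NP-hardness lemma in Section 3 covers a special case of \ac{DiNDP}, so the general problem inherits hardness, and your \NP-membership argument (certificate $E'$, all-pairs shortest paths, unreachable pairs forcing $R=\infty$) is exactly the paper's Floyd--Warshall argument. The paper itself remarks in Section 3 that the unit-weight result implies the general case. The appendix proof is nevertheless a genuinely different, self-contained argument: a direct reduction from \ac{ESUM}, building a hub $v_0$ with triangles $v_0 \to v_i \to v_i' \to v_0$ (forced by connectivity) whose arcs have weight and length $s_i$, plus an optional back-arc $(v_i',v_i)$ whose inclusion spends $s_i$ of budget and saves exactly $s_i$ of routing cost; choosing back-arcs then encodes selecting a subset of $S$ summing to exactly $A$ via $\budgetB = 3\overline{S}+A$ and $\budgetR = (12n-3)\overline{S}-A$. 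That construction uses general weights essentially (it degenerates under $w=d=1$), which is why the paper records it separately from the unit-weight reduction from \ac{X3C}.

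Your second, ``self-contained'' route has a genuine gap. Bidirecting an undirected \ac{NDP} instance with $\budgetB' = 2\budgetB$ works in the forward direction, but the reverse direction is not mere bookkeeping: a feasible directed solution may use each undirected edge in only one direction, so its weight can be as large as $2\budgetB$ while its undirected projection also weighs up to $2\budgetB$ rather than $\budgetB$, and strong connectivity does not force antiparallel arcs to come in pairs. The tilt/symmetrization argument in the main text is not a general-purpose tool; it relies on the specific star-and-tree gadget, where every asymmetric arc sits in one of finitely many local configurations with cancelling routing-cost deltas, and on unit weights making each swap weight-neutral. For an arbitrary weighted instance there is no reason a compensating reverse tilt exists, so as written this reduction would not go through. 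Since your first route already suffices, drop the second or replace it with the \ac{ESUM} gadget above.
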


\begin{proof} ~
 \ac{DiNDP} is in \NP, as $R$ can be computed in polynomial time by the Floyd-Warshall algorithm (see \cite{floyd62}). Let $I = (S = \{s_1,\ldots,s_n\},A)$ be an instance of \ac{ESUM} and construct an instance $J = (G,\budgetB,\budgetR)$ of \ac{DiNDP} as follows:
Let $V := \{v_0\} \cup \{v_1,\ldots,v_n\} \cup \{v_1',\ldots,v_n'\}$ and set the edges as $E:= \cup_{i=1}^n \{(v_0,v_i), (v_i',v_0), (v_i,v_i'),(v_i',v_i)\}$ with $w(e)=d(e)=s_i$ if $v_i$ or $v_i'$ in $e$, where $e \in E$ (see  \Cref{fig:DiNDPESUMGraph} for the construction). 

\begin{figure}[ht]
\begin{center}
\includegraphics{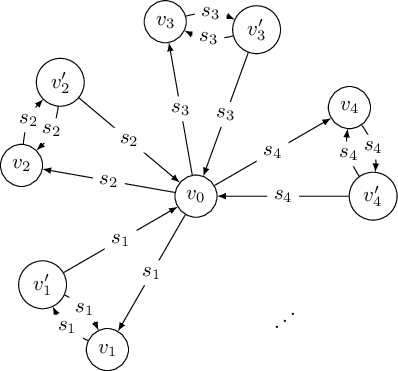}
\caption{The graph $G$ constructed for the reduction. Any connected subgraph needs to contain the cycles $v_0 \to v_i \to v_i'$. Adding the reverse edges $(v_i',v_i)$ then further reduce the routing costs by $s_i$.}
\label{fig:DiNDPESUMGraph}
\end{center}
\end{figure} 

Note that every connected subgraph $(V,E')$ of $G$ must contain the cycles $v_0 \to v_i \to v_i' \to v_0$. Let $G_c := (V,E_c)$ be the induced subgraph of those cycles and set $\overline{S} := \sum_{s \in S} s$ then $R(G_c)$ computes as $(12n-3)\overline{S}$ and $\sum_{e \in E_c} w(e) = 3\overline{S}$. Note that the only cycle-free paths in $G$ which make use of edges $(v_i',v_i)$ are those edges itself. Consequently, adding some $(v_i,v_i')$ to $E_c$ reduces only the distance from $v_i'$ to $v_i$ by $s_i$ and increases the total weight by $s_i$. Set $\budgetB := 3\overline{S} + A$ and $\budgetR := (12n-3)\overline{S} - A$ and assume $E' \subset E$ solves $J$. Then $E' = E_c \cup \{(v_i',v_i) \mid i \in \Gamma\}$ for some $\Gamma \subset \{1,\ldots,n\}$. Set $S':= \{s_i \mid i \in \Gamma\}$, then $\sum_{s \in S'} s= \sum_{i \in \Gamma} w(v_i',v_i) = \sum_{e \in E'} w(e) - \sum_{e \in E_c} w(e) \leq \budgetB - 3\overline{S} = A$ and $\sum_{s \in S'} s = R(G_c) - R(G') \geq (12n-3)\overline{S} - \budgetR = A$, thus $\sum_{s \in S'} s = A$ as desired.
If $S' = \{s_i \mid i \in \Gamma\}$ is a solution to $I$, then $E_c \cup \{(v_i,v_i') \mid i \in \Gamma\}$ solves $J$ with the same considerations.
\end{proof}

\begin{lemma}\label{lem:appDiNDPApprox} \ac{DiNDP} is not approximable within $|V|^{(1-\varepsilon)}$ for any $\varepsilon \in (0,1)$ unless \textbf{P} = \NP.
\end{lemma}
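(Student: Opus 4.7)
My plan is to follow the strategy used by Wong~\cite{wong80} for the undirected case and reduce from directed Hamiltonian cycle, which is \NP-complete. Given an instance $G = (V,E)$ with $|V| = n$, I construct a \ac{DiNDP} instance on the vertex set $V$ by taking every arc of $E$ as a ``short'' edge with weight $1$ and length $1$, and for every ordered pair $(u,v)$ with $u \neq v$ and $(u,v) \notin E$ adding a ``long'' edge with weight $1$ and length $L := n^K$, where $K$ is a constant to be fixed. I set $\budgetB := n$.

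The structural fact I will exploit is that any strongly connected digraph on $n$ vertices with exactly $n$ arcs is a directed Hamiltonian cycle: strong connectivity forces in- and out-degree at least $1$ at every vertex, the arc count forces both degrees to be exactly $1$, so the graph is a disjoint union of directed cycles and, by strong connectivity, a single one. Because every edge has weight $1$, the budget $\budgetB = n$ allows at most $n$ arcs, while finite $R$ requires at least $n$. Hence every feasible subgraph with finite routing cost is a Hamiltonian cycle in the augmented graph.

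For the yes-case, selecting a Hamiltonian cycle of $G$ yields $R(G') = \sum_{i\neq j} ((j-i)\bmod n) = n^2(n-1)/2 \leq n^3$ since all selected arcs have length $1$. In the no-case, any feasible subgraph with finite $R$ is a Hamiltonian cycle of the augmented graph that must use at least one long arc $(x,y)$; because the cycle is directed and simple, the only path in $G'$ from $x$ to $y$ is that very arc, giving $\dist|_{G'}(x,y) = L$ and therefore $R(G') \geq L = n^K$. Non-strongly-connected subgraphs contribute $R = \infty$, which is also at least $L$.

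For any $\varepsilon \in (0,1)$, a polynomial-time $|V|^{1-\varepsilon}$-approximation would on a yes-instance return a value at most $n^{1-\varepsilon}\cdot n^3 = n^{4-\varepsilon}$ and on a no-instance at least $n^K$. Fixing $K := 5$ works uniformly for every $\varepsilon \in (0,1)$ and sufficiently large $n$, so thresholding the returned value at, say, $n^{4-\varepsilon}$ would decide directed Hamiltonian cycle in polynomial time, forcing $\textbf{P} = \NP$. The main obstacle I anticipate is the structural lemma about $n$-arc strongly connected digraphs (elementary but easy to misstate) and the careful argument that in the no-case the directed, simple cycle really does force $\dist|_{G'}(x,y) = L$ rather than offering a short-edge detour; once those points are nailed down, the remaining gap-and-threshold arithmetic is routine.
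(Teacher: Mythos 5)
Your proof is correct, but it takes a genuinely different route from the paper. The paper reduces from \ac{DiSTP}: it attaches $h^k$-node stars to every terminal, adds a hub node $q$ reaching everything at routing cost $2$, zeroes out the routing costs of the original edges, and separates yes- from no-instances by the gap between $2h^{k+2}$ and $4h^{2k}$; because the vertex count blows up to roughly $h^{k+1}$, it must choose $k$ with $\tfrac{k-2}{k+2}\geq 1-\varepsilon$ to recover the $|V|^{1-\varepsilon}$ bound. You instead reduce from directed Hamiltonian cycle on the \emph{same} vertex set, using the tight budget $\budgetB=n$ together with the observation that a strongly connected digraph on $n$ vertices with $n$ arcs is a single directed cycle, and creating the gap via length-$n^K$ arcs on the non-edges; a single $K=5$ then works uniformly for all $\varepsilon\in(0,1)$. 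Both arguments are sound. Your version is more elementary and avoids the vertex blowup entirely, and it additionally shows that inapproximability persists with unit construction costs $w\equiv 1$ \textendash{} which does not contradict the paper's $2$-approximation for unit weights, since that corollary requires the generous budget $\budgetB\geq 2(|V|-1)$ while your instance has $\budgetB=|V|$. The paper's route has the advantage of staying within the \ac{DiSTP} machinery it already develops for \Cref{thm:inapproxMSP}, so the two hardness results share a common source. Two small points to tighten: cite the \NP-completeness of directed Hamiltonian cycle (Karp), and state explicitly that in a simple directed cycle the unique outgoing arc at $x$ forces every $x$\textendash$y$ walk to begin with the long arc $(x,y)$, so $\dist|_{G'}(x,y)=L$ exactly as you claim.
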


\begin{proof} By reduction from \ac{DiSTP}, let $(G,\terms, r\in \terms,B)$ be an instance with $h := |V| \geq 4$. Let  $k \geq 3$ such that $\frac{k-2}{k+2} \geq (1-\varepsilon)$. Define an instance of (\textbf{DiNDP}) by constructing a graph $\tilde{G}$ by copying $G$ and adding to each $v \in \terms$ a set of $h^k$ nodes $v_1,\ldots,v_{h^k}$ attached to $v$ two-ways with both values of $d$ and $w$ being $0$. Furthermore define an artificial node $q$ that is connected to all nodes in $V$, two-ways as well with $w$ equal to $0$ and routing costs $d$ of $2$. The edges in the copy of $G$ have the weight of the original graph and routing costs of $0$, the construction is shown in \Cref{fig:DiNDPApprox}. We set the demand to be $D(v_{1,j},v_{i,j'}) = 1$ for each star node $v_{1,j}$ at $r$ to each star node $v_{i,j}$ at $v_i$ for each terminal $v_i \neq v_1 = r$. Lastly let $n$ be the number of nodes in the graph $\tilde{G}$ just designed.

\begin{figure}[ht]
\begin{center}
\includegraphics{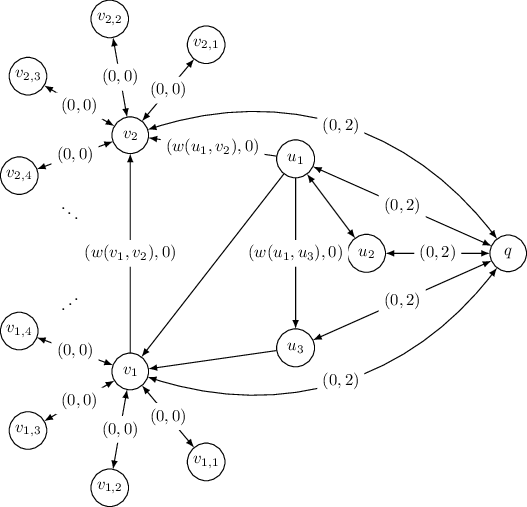}
\caption{The graph $\tilde{G}$; nodes $v_{i,l}$ are attached to each terminal node $v_i$ for $l=1,\ldots,h^k$ and a central node $q$ connects two-ways to all nodes, including the Steiner nodes $u_i$, in original $G$. The pairs denote the weight and routing costs $(w(e), d(e))$ of each edge.}
\label{fig:DiNDPApprox}
\end{center}
\end{figure}

It holds $n^{(1-\varepsilon)} \leq n^{\frac{k-2}{k+2}} \leq ( h^{k+1} +1)^{\frac{k-2}{k+2}} < (h^{k+2})^{\frac{k-2}{k+2}} = h^{k-2}$, since the number of nodes in $\tilde{G}$ is bounded by $(h-s)h^k  + s + 1 \leq h^{k+1}+1$, where $s = |V| - |\terms|$ is the number of Steiner nodes in $G$. Let $E' \subset \tilde{E}$ be a set of directed edges with $R(E') \leq n^{(1-\varepsilon)} R^* < h^{k-2}R^*$ and $\sum_{e \in E'} w(e) \leq B$. We show $R(E') < 4h^{2k} \Longleftrightarrow I$ decides true. 

Assume $R(E') < 4h^{2k}$, then for each $v \in \terms$ there exists a directed path from $r$ to $v$ in $E'$, without using any edges to $q$. Assume the contrary, and let such a node $v$ be given. Following the path over $q$ adds routing costs of at least $4$ for each pair of nodes attached to the stars of $r$ and $v$, respectively, which are $h^{2k}$ many, therefore leading to routing costs of at least $4h^{2k}$. By collecting paths from $r$ to all terminal nodes and choosing any arborescence rooted at $r$ in the resulting set of edges, we achieve a solution for $I$. 
Conversely, assume $I$ decides true and let $A$ be a corresponding arborescence in $G$ that spans $\terms$ and is rooted at $r$. Setting $E'' := A \cup \{e \in \tilde{E} \mid e \notin E\}$ delivers a subset with construction costs of $w(E'')= w(A) \leq B$ and routing costs $R(E'') := \sum_{u,v \in \tilde{V}} D(u,v) \dist|_{G''}(u,v) = 2 \sum_{v \in V} \dist(s,v) \leq 2h^{k+2}$ as all routes from $r$ so the stars are taken in the arborescence without routing costs and all routes from $q$ must be taken over the artificial edges as there is no other option. But then $R(E') < 2h^{k+2}h^{k-2} = 2h^{2k} < 4h^{2k}$. Consequently, $I$ decides true if and only if $R(E') <4h^{2k}$.
\end{proof}

\nomenclature{$B$}{Budget for the public transport network}
\nomenclature{$\tau_i$}{Travel time per unit distance and passenger for mode $i = 1, \dots, m$}
\nomenclature{$\tau_0$}{Travel time per unit distance and passenger for private transport}
\nomenclature{$\eta_i$}{Energy consumption per unit distance and vehicle for mode $i = 1, \dots, m$}
\nomenclature{$\eta_0$}{Energy consumption per unit distance and vehicle for private transport}
\nomenclature{$c_i$}{Costs per unit distance and vehicle for mode $i = 1, \dots, m$}
\nomenclature{$c_0$}{Costs per unit distance and vehicle for private transport, assumed to be $0$}
\nomenclature{$k_i$}{Capacity per vehicle of mode $i = 1, \dots, m$}
\nomenclature{$k_0$}{Capacity per vehicle of public transport, assumed to be $1$}
\nomenclature{$m$}{The number of modes}

\nomenclature{$D$}{Demand between commodities}
\nomenclature{$L$}{Layout of designed network}
\nomenclature{$w$}{edge weight}
\nomenclature{$d$}{edge routing costs/length}
\nomenclature{$\dist$}{distance between nodes \wrt $d$}
\nomenclature{$F_{\kappa}$}{$s$-$t$-flow for commodity ${\kappa} \in K$}
\nomenclature{$F_e^0$}{number of mode-$0$ passengers on $e$}
\nomenclature{$F$}{Cumulated flow value on edges over all commodities}
\nomenclature{$M_{\kappa}$}{Modal split for commodity ${\kappa} \in K$ describing percentage of travelers using mode $i$ on edge $e$}
\nomenclature{$M$}{Cumulated modal split on edges over all commodities}
\nomenclature{$R$}{Routing costs of (sub)graph}

\nomenclature{$\Tau$}{Total travel time}
\nomenclature{$\Eta$}{Total energy consumption}
\nomenclature{$\Eta^*$}{Minimal value of $\Eta$}
\nomenclature{$\Eta_0$}{Total energy consumption of all-mode-0 shortest path flow solution}
\printnomenclature

\end{document}